\documentclass[11pt]{amsart}

\usepackage[T1]{fontenc}
\usepackage{microtype}
\usepackage{amsmath,amssymb,mathtools}
\usepackage{amsthm}
\usepackage{booktabs}
\usepackage{array}
\usepackage{enumitem}
\usepackage[margin=1in]{geometry}
\usepackage[colorlinks=true,linkcolor=blue,citecolor=blue,urlcolor=blue]{hyperref}

\newtheorem{theorem}{Theorem}[section]
\newtheorem{proposition}[theorem]{Proposition}
\newtheorem{lemma}[theorem]{Lemma}
\newtheorem{corollary}[theorem]{Corollary}
\newtheorem{remark}[theorem]{Remark}
\newtheorem{definition}[theorem]{Definition}
\numberwithin{equation}{section}

\newcommand{\N}{\mathbb N}
\newcommand{\Z}{\mathbb Z}
\newcommand{\F}{\mathbb F}
\newcommand{\R}{\mathbb R}
\newcommand{\Pp}{\mathbb P}
\newcommand{\rad}{\operatorname{rad}}
\newcommand{\Vol}{\operatorname{Vol}}
\newcommand{\1}{\mathbf 1}

\title[Resolution of Erd\H{o}s Problem 1061]
{A resolution of Erd\H{o}s Problem 1061 on the sum-of-divisors function}
\author{Eric Li}
\date{June 24, 2026}
\subjclass[2020]{Primary 11N36; Secondary 11A25, 11D85, 11N13}
\keywords{sum-of-divisors function, linear equations in primes, Siegel--Walfisz theorem, split quadric, Erd\H{o}s problem}
\thanks{Email addresses: \href{mailto:el593@cam.ac.uk}{el593@cam.ac.uk}, \href{mailto:contact@ericli.com}{contact@ericli.com}.}
\hypersetup{
  pdftitle={A resolution of Erdos Problem 1061 on the sum-of-divisors function},
  pdfauthor={Eric Li}
}

\makeatletter
\def\@setauthors{%
  \begingroup
  \def\thanks{\protect\thanks@warning}%
  \trivlist
  \centering\footnotesize \@topsep30\p@\relax
  \advance\@topsep by -\baselineskip
  \item\relax
  \author@andify\authors
  \def\\{\protect\linebreak}%
  \MakeUppercase{\authors}\par
  \vspace{0.75em}%
  {\normalfont\normalsize Trinity College, University of Cambridge\par}%
  \endtrivlist
  \endgroup
}
\makeatother

\begin{document}

\begin{abstract}
We resolve Erd\H{o}s Problem 1061, the question whether the number
\[
 S(x)=\#\{(a,b)\in\N^2:a+b\le x,
 \ \sigma(a)+\sigma(b)=\sigma(a+b)\}
\]
of ordered solutions has a linear asymptotic $S(x)\sim cx$.  In fact the
opposite extreme holds at every fixed logarithmic scale: for every $R>0$,
\[
 \lim_{x\to\infty}\frac{S(x)}{x(\log x)^R}=+\infty.
\]
The construction begins with three integers having the same abundancy index
and reduces the divisor-sum identity to two equations in six primes.  After a
linear change of variables, these equations lie on a split quadric.  A
three-parameter rational ruling of the quadric supplies many affine systems
of six linear forms.  An exact lattice-index calculation, an elementary
codimension-two parameter sieve, and Bienvenu's higher-dimensional
Siegel--Walfisz theorem give prime points uniformly on these planes.  Coprime
multiplier amplification then yields the stated resolution.
\end{abstract}

\maketitle

\section{Introduction}

Let $\sigma(n)=\sum_{e\mid n}e$.  Erd\H{o}s Problem 1061, a question of
Erd\H{o}s reported in Problem B15 of Guy's collection \cite{Guy}, asks for the
order of
\[
 S(x)=\#\{(a,b)\in\N^2:a+b\le x,
 \ \sigma(a)+\sigma(b)=\sigma(a+b)\},
\]
and in particular whether $S(x)\sim cx$ for some $c>0$.  Ordered pairs are
counted throughout.  The main result of this paper resolves the problem by
proving that $S(x)$ is larger than $x$ by every fixed power of $\log x$.

\begin{theorem}\label{thm:main}
For every fixed $R>0$,
\[
 \lim_{x\to\infty}\frac{S(x)}{x(\log x)^R}=+\infty.
\]
In particular, $S(x)\not\sim cx$ for every finite $c>0$.
\end{theorem}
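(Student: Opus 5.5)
The plan is to build solutions from a fixed ``seed'' triple. Choose integers $u,v,w$ with $\sigma(u)/u=\sigma(v)/v=\sigma(w)/w=\alpha$; these must be coprime to everything introduced later, and any common abundancy will do. We then look for solutions of the shape
\[
 a=u\,p_1p_2,\qquad b=v\,p_3p_4,\qquad a+b=w\,p_5p_6,
\]
where the $p_i$ are distinct primes coprime to $uvw$. Multiplicativity gives $\sigma(a)=\alpha u(p_1+1)(p_2+1)$, and similarly for $b$ and $a+b$. So the identity $\sigma(a)+\sigma(b)=\sigma(a+b)$, together with the additive relation, becomes two equations in six primes:
\[
 up_1p_2+vp_3p_4=wp_5p_6,\qquad u(p_1+1)(p_2+1)+v(p_3+1)(p_3\text{-}\text{pair})\ldots
\]
In full, the second equation is
\[
 u(p_1+1)(p_2+1)+v(p_3+1)(p_4+1)=w(p_5+1)(p_6+1).
\]
Subtracting the first equation from the second leaves a linear equation
\[
 u(p_1+p_2)+v(p_3+p_4)+u+v=w(p_5+p_6)+w.
\]
Solving this for one variable and substituting into the first equation yields a single quadratic form in five variables, inhomogeneous with a constant term. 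After an affine change of variables this should become a split quadric, one containing rational linear subspaces of dimension about three. The seed must be chosen so that the discriminant makes the form split; this has to be arranged and checked explicitly.

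Next I would parametrize a three-parameter family of rational planes on the quadric, the ruling. Each plane gives an affine system of six linear forms $p_i=L_i(n_1,n_2)+c_i$ in two integer variables. The required input is the Green--Tao--Ziegler linear-equations-in-primes theorem, or, as the abstract suggests, Bienvenu's higher-dimensional Siegel--Walfisz theorem with some uniformity in the coefficients. Either one shows that a plane of size about $N^2$ contains roughly $N^2/(\log N)^6$ prime points, provided local obstructions vanish. The steps are:
\begin{enumerate}
\item Compute the lattice index of the integer points on each plane.
\item Verify the local densities are positive, with no prime $p$ forcing some $L_i\equiv0$.
\item Sieve out degenerate parameter choices, such as forms that are dependent or planes that coincide. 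These lie on a codimension-two subvariety of the parameter space and are negligible.
\end{enumerate}

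Each plane contributes about $x/(\log x)^6$ solutions with $a+b\le x$, since $p_5p_6\asymp x$ and the two free variables range up to about $\sqrt x$. This alone is far too small. The amplification therefore comes from multipliers. If $(a,b)$ is a solution and $m$ is coprime to $ab(a+b)$, then $\sigma(ma)=\sigma(m)\sigma(a)$, so $(ma,mb)$ is also a solution. Summing over $m\le x/y$ coprime to the relevant primes, and over the seed solutions up to $y$, gives
\[
 S(x)\gg \sum_{m}\#\{\text{seed solutions}\le x/m\}.
\]
With a linear count of seeds this yields $x\log x$-type growth. To reach an arbitrary power $(\log x)^R$, I would vary the seed itself. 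Taking many seeds $(u,v,w)$ of equal abundancy, each with $\gg 1/(uvw\text{-dependent})$ density, gives constants whose sum diverges. Alternatively, one can iterate, using the ruling parameters to produce roughly $(\log x)^{R+6}$ essentially disjoint planes, each contributing $x/(\log x)^6$.

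The main obstacle will be uniformity. The prime-counting input has to hold uniformly over a growing family of planes, whose coefficients grow like a power of $\log x$, so that the individual counts can be summed. A Siegel--Walfisz-type result with coefficients up to $(\log x)^A$ is exactly what allows this. A second burden is controlling overlaps, so that distinct planes and multipliers do not recount the same pair $(a,b)$. I would handle this by making the primes in different planes occupy disjoint size ranges or residue classes.
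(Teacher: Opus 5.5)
Your outline follows the paper's architecture: a fixed equal-abundancy triple ($3600(15,31,508)$ in the paper), two equations in six primes, the quadric $15y_1y_2+31y_3y_4+462y_5\zeta=0$, a three-parameter ruling, a lattice index, a parameter sieve, Bienvenu's theorem with polylogarithmic coefficients, and coprime multipliers. The quadric is a sum of three hyperbolic planes, so it is split for every triple and there is no discriminant condition to arrange.

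\textbf{The count of points per plane.} The gap is in the count that is supposed to produce the power of $\log x$. Your claim that $(\log x)^{R+6}$ planes each contribute about $x/(\log x)^6$ is false, because only finitely many ruling planes have parameters of bounded height. Take a plane with $(\alpha,\beta,\gamma)=(A/q,B/q,C/q)$, $A\asymp qZ^2$, $B,C\asymp qZ$. Its projected integral lattice has index $q^2\delta$ with $\delta\mid508$. Its forms have sizes $ZT$ and $Z^2T$, so the perimeter is $\asymp Z^3T^2$. Hence at height $Y$ it carries only about $Y/(q^2Z^3(\log Y)^6)$ prime points. What rescues the argument is that there are $\asymp q^3Z^4$ parameter triples per denominator. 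Summing over $q\in[Q,2Q]$ gives $\asymp YQ^2Z/(\log Y)^6$ points at height $Y$. This comparison of plane count against index and height loss is the quantitative heart of the proof, and your plan replaces it with an incorrect count. With it, $Q=Z\approx(\log x)^\kappa$ already gives $S(x)\gg x(\log x)^{3\kappa-6}$, and the multipliers add only one more logarithm. Your other route, varying the seed, gives at best $S(x)\ge(\sum c_{\rm seed})x$ with no rate, and you exhibit no supply of seeds making that sum diverge. So it cannot beat every $(\log x)^R$.

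\textbf{Overlaps and uniform inputs.} Your overlap control also fails. All these planes feed the same height scale, and separating polylogarithmically many of them by size windows or residue classes costs at least the factor you are trying to gain. The paper argues geometrically instead:
\begin{itemize}
\item Since $(ABC,q)=1$, $q$ is the reduced denominator, so distinct retained quadruples give distinct skew matrices.
\item For $\theta\ne\theta'$, $A_\theta-A_{\theta'}$ is a nonzero $3\times3$ skew matrix, hence of rank two.
\item So the two graph subspaces meet in a line through the origin, which meets the slice $\zeta-y_5=1$ at most once.
\item Hence the total excess multiplicity is at most $\binom{\#\text{planes}}{2}=O(Q^8Z^8)$, which is negligible against $YQ^2Z/(\log Y)^6$.
\end{itemize}
Summing the per-plane counts also needs two uniform inputs that you only partly flag. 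The first is a region where all six forms are positive with uniform sizes: the two-height region $\alpha\asymp Z^2$, $\beta,\gamma\asymp Z$, $-2T\le r\le-T$, $|s|\le\varepsilon|r|$. The second is a uniform lower bound $\prod_p\beta_p\ge c_*$. This comes from three sources:
\begin{itemize}
\item a fixed local witness for $p\mid W$;
\item the special-fibre computation for $p\mid q$;
\item for $p\nmid Wq$, the codimension-two sieve.
\end{itemize}
That sieve's job is to exclude forms vanishing identically modulo $p$. It is not about global degeneracies such as dependent forms or coinciding planes, which never occur in the parameter box.
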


\begin{remark}[Ordered and unordered conventions]\label{rem:ordered}
There are no diagonal solutions.  Indeed, if $a=b=m$ and
$m=2^v m_0$ with $m_0$ odd, then multiplicativity at the odd part gives
\[
 \frac{\sigma(2m)}{\sigma(m)}
 =\frac{\sigma(2^{v+1})}{\sigma(2^v)}
 =\frac{2^{v+2}-1}{2^{v+1}-1}>2.
\]
Hence $\sigma(2m)\ne2\sigma(m)$.  Every solution therefore occurs in a
pair $(a,b),(b,a)$ with $a\ne b$, so the corresponding unordered counting
function is exactly $S(x)/2$.  The same evenness is noted for the fixed-sum
counts in OEIS A110177 \cite{OEIS}.
\end{remark}

\newpage
\paragraph{\textbf{Use of artificial intelligence tools.}}
Large language models, primarily OpenAI's ChatGPT, were used extensively
throughout the research.  The author originated the ideas and research
directions, while the models contributed substantially to the technical
development of the work: they were used to explore and further develop the
ideas, produce technical lemmas and calculations, generate and debug code, and
assist in auditing the proof.  The author critically evaluated outputs from
different model instances, selected and synthesised promising elements, and
discarded or corrected unsuccessful or flawed suggestions.  All strategic
decisions were made by the author, who takes full responsibility for the
mathematical correctness of the final results.

\medskip
\noindent\textbf{Remark \ref*{rem:ordered} continued.}
The mechanism is quantitative.  A \emph{reduced core} is an ordered coprime
pair $(u,v)$.  We construct many cores which are not themselves solutions
but become solutions after multiplication by the fixed, non-coprime
multiplier $3600$.

\begin{theorem}[Two-height core theorem]\label{thm:cores}
There is an absolute constant $c_0$ with $0<c_0<1$ and the following property.  For every
$\kappa>0$, there exist constants
\[
 c_\kappa>0,\qquad Y_\kappa\ge3,\qquad Q_\kappa\ge1,
 \qquad Z_\kappa\ge100
\]
such that, whenever
\[
 Y\ge Y_\kappa,\qquad
 Q_\kappa\le Q\le(\log Y)^\kappa,\qquad
 Z_\kappa\le Z\le(\log Y)^\kappa,
\]
there are at least
\[
 c_\kappa\frac{YQ^2Z}{(\log Y)^6}
\]
distinct ordered pairs $(u,v)\in\N^2$ satisfying
\[
 (u,v)=1,\qquad c_0Y\le u+v\le Y,
\]
\[
 \sigma(3600u)+\sigma(3600v)=\sigma(3600(u+v)),
\]
but
\[
 \sigma(u)+\sigma(v)\ne\sigma(u+v).
\]
\end{theorem}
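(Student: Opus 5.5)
The plan is to build the cores from three fixed \emph{shape} integers $\alpha,\beta,\gamma$, coprime to $30$, such that $3600\alpha$, $3600\beta$, $3600\gamma$ all have the same abundancy index $\lambda=\sigma(3600\alpha)/(3600\alpha)$, while $\alpha,\beta,\gamma$ themselves do \emph{not} share a common abundancy index. These would be found by a finite search and recorded as a certificate. I would then look for
\[
 u=\alpha p_1p_2,\qquad v=\beta p_3p_4,\qquad u+v=\gamma p_5p_6,
\]
with $p_1,\dots,p_6$ distinct primes exceeding every prime factor of $30\alpha\beta\gamma$. Multiplicativity gives $\sigma(3600u)=\lambda\cdot3600\,\alpha(p_1+1)(p_2+1)$, and similarly for $v$ and $u+v$. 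After subtracting $\lambda\cdot3600$ times the additive relation $\alpha p_1p_2+\beta p_3p_4=\gamma p_5p_6$, the divisor-sum identity for the $3600$-multiples becomes the linear equation
\[
 \alpha(p_1+p_2+1)+\beta(p_3+p_4+1)=\gamma(p_5+p_6+1).
\]
Thus the problem reduces to one quadratic and one linear equation in six primes. Coprimality $(u,v)=1$ will follow from the primes being distinct and from choosing $(\alpha,\beta)=1$.

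Next I would eliminate one variable using the linear equation and apply a linear change of variables to the remaining five. The quadratic equation should then become a nondegenerate split quadric, of the shape $x_1x_2-x_3x_4=\ell(x)$ after completing squares. I would check the split property directly from the explicit coefficients. The quadric admits a three-parameter rational family of linear subvarieties (a ruling), each of which is an affine $3$-plane in the original six coordinates once the linear equation is restored.

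On each such plane the six $p_i$ are six affine-linear forms in three integer parameters. I would index the planes by two rational slopes with height about $Q$ and one auxiliary parameter of size about $Z$, which is intended to produce the factor $Q^2Z$. For each plane I need the following.
\begin{itemize}
 \item An exact computation of the lattice index of the integer points, so that the local densities are uniform.
 \item Verification that the six forms are pairwise non-proportional and that the system has finite complexity, so that Bienvenu's higher-dimensional Siegel--Walfisz theorem applies. Because $Q,Z\le(\log Y)^\kappa$ are only polylogarithmic, the uniformity in Siegel--Walfisz is sufficient.
\end{itemize}
This yields about $\mathfrak S\cdot(\text{volume})/(\log Y)^6$ prime points per plane, with the singular series $\mathfrak S$ bounded below uniformly. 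I would pick the box so that $c_0Y\le u+v\le Y$ holds automatically.

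It then remains to discard bad points and to control multiplicity.
\begin{itemize}
 \item A point satisfying $\sigma(u)+\sigma(v)=\sigma(u+v)$ would satisfy a second linear relation, with coefficients coming from $\sigma(\alpha),\sigma(\beta),\sigma(\gamma)$, which is independent of the first because $\alpha,\beta,\gamma$ do not share an abundancy index.
 \item Coincidences $p_i=p_j$ likewise cut out proper subvarieties.
 \item The same $(u,v)$ arising from several planes is a codimension-two condition on the parameters.
\end{itemize}
An elementary codimension-two sieve over the parameter box shows that all three exceptional sets contribute $o(YQ^2Z/(\log Y)^6)$, uniformly in the stated ranges.

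The main obstacle will be the uniformity step. I must show that, for all planes in the family with heights up to $(\log Y)^\kappa$, the lattice index and the local factors of the singular series are bounded below by a constant depending only on $\kappa$, and that no plane is locally obstructed. I would handle this by fixing congruence conditions on the ruling parameters modulo a small fixed modulus, chosen so that every local factor is positive. The exact index formula would then be computed as a function of the parameters, with a positive proportion of parameter choices retained.
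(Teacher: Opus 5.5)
Your outline has the same overall architecture as the paper: equal-abundancy shapes, two equations in six primes, a split quadric with a rational ruling, a lattice index, Bienvenu's theorem and overlap control. However, several concrete steps fail as stated.

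\textbf{The shape condition.} It is self-contradictory. If $\alpha,\beta,\gamma$ are coprime to $30$, they are coprime to $3600=2^4 3^2 5^2$. Then $\sigma(3600\alpha)/(3600\alpha)=\frac{\sigma(3600)}{3600}\cdot\frac{\sigma(\alpha)}{\alpha}$, so equal abundancy of $3600\alpha,3600\beta,3600\gamma$ forces equal abundancy of $\alpha,\beta,\gamma$. Worse, whenever $(uv(u+v),30)=1$ one has $\sigma(3600n)=\sigma(3600)\sigma(n)$ for $n=u,v,u+v$. The two divisor-sum conditions in the theorem are then equivalent, and no core of your shape can exist. The multiplier must act non-multiplicatively at $2,3,5$. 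The paper uses $(15,31,508)$, with $3600\cdot15=2^4 3^3 5^3$ and $3600\cdot508=2^6 3^2 5^2\cdot127$.

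\textbf{The non-solution step.} This is also wrong. Once the abundancies of $\alpha,\beta,\gamma$ differ, you cannot cancel the product relation. The condition $\sigma(u)+\sigma(v)=\sigma(u+v)$ is then \emph{quadratic}, not a second linear relation; its quadratic part is $\beta\bigl(\frac{\sigma(\beta)}{\beta}-\frac{\sigma(\alpha)}{\alpha}\bigr)p_3p_4+\gamma\bigl(\frac{\sigma(\alpha)}{\alpha}-\frac{\sigma(\gamma)}{\gamma}\bigr)p_5p_6$. You would have to show it does not vanish identically on each plane. The paper sidesteps this: for $(15,31,508)$ both coefficients are negative, so the discrepancy is $-\frac{88}{5}p_3p_4-\frac{416}{5}p_5p_6+O(Z^2T)<0$ on the whole region.

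\textbf{The dimension of the ruling.} After eliminating $x_6$, the projective closure is a smooth quadric in $\mathbb P^5$; in the paper it is $15y_1y_2+31y_3y_4+462y_5\zeta$ with $\zeta=y_5+1$. Its largest linear subspaces are projective planes. So the three-parameter family consists of affine $2$-planes (isotropic $3$-spaces cut by $\zeta-y_5=1$), and the six forms are in two variables, not three. Each plane then carries only $\asymp T^2/\Delta$ points, while $Y\asymp Z^3T^2$. The gain must therefore come from balancing the number of planes against the lattice index. In the paper the three skew entries $A/q,B/q,C/q$ share one squarefree denominator $q\sim Q$ and have two heights $Z^2,Z,Z$. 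This gives $\gg q^3Z^4$ planes of index $q^2\delta$. Your ``two slopes of height $Q$, one parameter of size $Z$'' does not exhibit this balance.

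\textbf{Local obstructions (the main missing idea).} A fixed small modulus cannot make every local factor positive, because the parameters and $q$ grow with $Y$. At primes $p$ of size up to $\asymp qZ^2$ a form can vanish identically modulo $p$, giving $\beta_p=0$. For instance, $x_2\equiv0$ if and only if $A\equiv0$ and $462B+q\equiv0\pmod p$. At primes $p\mid q$ the integral points reduce to a different, special fibre, which needs its own check.

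What saves the argument is that for $p\nmid Wq$ each obstruction is codimension two in $(A,B,C)$, with density $O(p^{-2})$. All of them can therefore be sieved out simultaneously while keeping a positive proportion of triples (Proposition~\ref{prop:parametersieve}). This is supplemented by explicit nonvanishing conditions at $p\mid q$, controlled by $\sum_{p\mid q}1/p\le M_0$ (Lemmas~\ref{lem:denominators} and~\ref{lem:specialfibre}).

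The codimension-two sieve belongs here, not in the treatment of the exceptional sets. Those are handled plane by plane: collision lines carry $O(T)$ points, and two distinct planes meet in at most one point.
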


The only deep theorem invoked as a black box is the prime-supported
uniform theorem of Bienvenu \cite[Proposition~2.1]{Bienvenu}.  Its proof rests
on the Green--Tao linear-forms method and the subsequent
M\"obius--nilsequence and inverse-theorem machinery
\cite{GreenTao,GreenTaoMobius,GreenTaoZiegler}.  We state and establish the precise
specialization needed in Section~\ref{sec:primes}.  The broader
sum-of-divisors context is discussed, for example, by Pollack and Pomerance
\cite{PollackPomerance}; fixed-sum data for the present equation are
recorded in OEIS A110177 \cite{OEIS}.  Every lattice, local, and
parameter-sieve calculation used below is proved explicitly; the thresholds
inherited from the prime-pattern theorem enter only qualitatively.

For bibliographic orientation, the Erd\H{o}s Problems database currently lists
Problem 1061 as open and cautions that its literature coverage may be
incomplete \cite{Bloom}.  The proof below is self-contained apart from the
stated prime-pattern input.

Throughout,
\[
 \N=\{1,2,3,\ldots\},\qquad \Pp=\{2,3,5,\ldots\}.
\]
We write $\varphi$ for Euler's totient function, $\mu$ for the M\"obius
function, $\omega(n)$ for the number of distinct prime divisors of $n$, and
$\rad(n)$ for the product of those prime divisors.  The scale parameters $Y,Q,Z,T$ are positive real numbers;
$q,A,B,C$ and all lattice variables are integers.  Half-open intervals are
used when integer boxes are counted.  Replacing a real endpoint by its floor
or ceiling changes the relevant counts only by the boundary terms displayed
below.

The constants are chosen in the following order:
\[
 W\longrightarrow M_0\longrightarrow P\longrightarrow Z_0
 \longrightarrow q_0\longrightarrow Q_\kappa,Z_\kappa,Y_\kappa.
\]
Here $W$ is fixed in \eqref{eq:W}, $M_0$ comes from
Lemma~\ref{lem:denominators}, $P$ is the fixed small-prime cutoff in the
parameter sieve, and $Z_0,q_0$ are then chosen in that order.  All later
thresholds may depend on the fixed value of $\kappa$, but not on the varying
parameters within the stated ranges.

\section{A friendly coefficient triple}\label{sec:friendly}

Set
\[
 (K,M,N)=(54000,111600,1828800)=3600(15,31,508).
\]
Write
\[
 k=15,\qquad m=31,\qquad n=508,\qquad D=n-k-m=462.
\]
The factorizations
\[
 K=2^4 3^3 5^3,\qquad
 M=2^4 3^2 5^2\cdot31,\qquad
 N=2^6 3^2 5^2\cdot127
\]
give
\[
 \sigma(K)=193440,\qquad \sigma(M)=399776,
 \qquad \sigma(N)=6551168,
\]
and hence
\begin{equation}
 \frac{\sigma(K)}K=\frac{\sigma(M)}M=\frac{\sigma(N)}N
 =\frac{806}{225}.\label{eq:abundancy}
\end{equation}

Suppose that six pairwise distinct primes $p_1,\dots,p_6>127$ satisfy
\begin{align}
 kp_1p_2+mp_3p_4&=np_5p_6,\label{eq:product}\\
 k(p_1+p_2)+m(p_3+p_4)-n(p_5+p_6)&=D.\label{eq:linear}
\end{align}
Since $k+m-n=-D$, expansion of the shifted products gives
\begin{equation}
 k(p_1+1)(p_2+1)+m(p_3+1)(p_4+1)
 =n(p_5+1)(p_6+1).\label{eq:shifted}
\end{equation}
Consequently, with
\[
 a=Kp_1p_2,\qquad b=Mp_3p_4,
\]
we have $a+b=Np_5p_6$.  The six primes are distinct and avoid the prime
divisors of $KMN$, so multiplicativity is applicable to each displayed
factorization.  Combining \eqref{eq:abundancy} and \eqref{eq:shifted} gives
\[
 \sigma(a)+\sigma(b)=\sigma(a+b).
\]

\section{The split quadric and its ruling}\label{sec:quadric}

Put $x_i=p_i$.  Eliminate $x_6$ by \eqref{eq:linear}, and define
\[
 y_i=x_i-x_5\quad(1\le i\le4),\qquad y_5=x_5,
 \qquad \zeta=x_5+1.
\]
A direct substitution shows that \eqref{eq:product}--\eqref{eq:linear} are
equivalent to
\begin{equation}
 15y_1y_2+31y_3y_4+462y_5\zeta=0.\label{eq:split}
\end{equation}
Let
\[
 U=(15y_1,31y_3,462y_5),\qquad V=(y_2,y_4,\zeta).
\]
Then \eqref{eq:split} is $U\cdot V=0$.

For $\theta=(\alpha,\beta,\gamma)\in\R^3$, put
\[
 A_\theta=
 \begin{pmatrix}
 0&\alpha&\beta\\
 -\alpha&0&\gamma\\
 -\beta&-\gamma&0
 \end{pmatrix}.
\]
The graph $V=A_\theta U$ is isotropic because $A_\theta$ is skew-symmetric.
Intersect it with $\zeta-y_5=1$, and set
\[
 r=U_1=15y_1,\qquad s=U_2=31y_3.
\]
The resulting affine plane is parametrized by
\begin{align}
 x_1&=-1+(1/15-\beta)r-\gamma s,\label{eq:x1}\\
 x_2&=-1-462\beta+(-462\beta^2-\beta)r
 +(\alpha-462\beta\gamma-\gamma)s,\label{eq:x2}\\
 x_3&=-1-\beta r+(1/31-\gamma)s,\label{eq:x3}\\
 x_4&=-1-462\gamma+(-\alpha-462\beta\gamma-\beta)r
 +(-462\gamma^2-\gamma)s,\label{eq:x4}\\
 x_5&=-1-\beta r-\gamma s,\label{eq:x5}\\
 x_6&=\frac{15(x_1+x_2)+31(x_3+x_4)-508x_5-462}{508}.
 \label{eq:x6}
\end{align}
Substitution into $U\cdot V=0$ proves \eqref{eq:product}; the definition of
$x_6$ proves \eqref{eq:linear}.  Thus both equations hold identically on
every plane in this ruling.

\subsection{A two-height positive region}

Fix
\[
 \varepsilon=10^{-5}.
\]
For $Z\ge100$, impose
\begin{equation}
 Z^2\le\alpha\le2Z^2,
 \qquad Z\le\beta,\gamma\le2Z,\label{eq:parambox}
\end{equation}
and let
\begin{equation}
 \mathcal R_T=\{(r,s):-2T\le r\le-T,\ |s|\le\varepsilon|r|\}.
 \label{eq:RT}
\end{equation}

\begin{lemma}[Uniform archimedean bounds]\label{lem:arch}
There are absolute constants $0<c<C$ such that, for every $Z\ge100$ and
$T\ge10$, every point satisfying \eqref{eq:parambox} and
$(r,s)\in\mathcal R_T$ satisfies
\[
 cZT\le x_1,x_3,x_5\le CZT,
\]
\[
 cZ^2T\le x_2,x_4,x_6\le CZ^2T.
\]
Consequently,
\begin{equation}
 cZ^3T^2\le508x_5x_6\le CZ^3T^2.\label{eq:height}
\end{equation}
\end{lemma}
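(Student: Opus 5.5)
The plan is a direct, term-by-term estimate of the explicit parametrization \eqref{eq:x1}--\eqref{eq:x6}. Throughout I write $r=-t$ with $T\le t\le2T$ and $s=\eta t$ with $|\eta|\le\varepsilon=10^{-5}$, so that $(r,s)\in\mathcal R_T$. In each coordinate I will isolate one dominant monomial in $\alpha,\beta,\gamma$, times $t$, whose size is forced by \eqref{eq:parambox}. Everything else splits into two kinds of error:
\begin{itemize}
\item terms carrying a factor $\eta$, which are at most $\varepsilon$ times a quantity of the same order in $Z$;
\item constant terms such as $-1$ and $-462\beta$, which are smaller by a factor at least $\min(Z,T)^{-1}$, using $Z\ge100$ and $T\ge10$.
\end{itemize}
The constants $c,C$ then come out absolute, independent of $Z$ and $T$.

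For the short coordinates I would write, for example, $x_5=-1+t(\beta-\gamma\eta)$. Here $\beta\in[Z,2Z]$ and $|\gamma\eta|\le2\varepsilon Z$, so $x_5\in[\tfrac12ZT,\,5ZT]$. The other two have the same shape:
\[
x_1=-1+t(\beta-\tfrac1{15}-\gamma\eta),\qquad x_3=-1+t\bigl(\beta+(\tfrac1{31}-\gamma)\eta\bigr).
\]
They obey the same bounds, since the $1/15$ and $1/31$ are negligible against $Z\ge100$.

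For the long coordinates:
\begin{itemize}
\item In $x_2$ the dominant term is $(462\beta^2+\beta)t\in[462Z^2T,\,3700Z^2T]$. The $s$-coefficient satisfies $|\alpha-462\beta\gamma-\gamma|\le1851Z^2$, so its contribution is at most $0.02Z^2t$. The constant $-1-462\beta$ is at most $925Z\le9.25Z^2$ in absolute value, which is tiny next to $462Z^2T$.
\item In $x_4$ the dominant term is $(\alpha+462\beta\gamma+\beta)t$, which is at least $463Z^2T$ and at most roughly $3700Z^2T$. The $\eta$-term is at most $\varepsilon(462\cdot4Z^2+2Z)t$, and the constant $-1-462\gamma$ is handled as before.
\end{itemize}

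For $x_6$ I will use \eqref{eq:x6} directly. First I bound
\[
15x_2+31x_4\ge(15\cdot462+31\cdot463-1)Z^2T,
\]
and note that $x_1,x_3>0$. The negative terms are $508x_5\le2540ZT\le25.4Z^2T$ and the constant $462$. Both are negligible, so $x_6\ge cZ^2T$. The upper bound $x_6\le CZ^2T$ follows from the upper bounds on $x_1,\dots,x_4$. Multiplying the bounds for $x_5$ and $x_6$ then gives \eqref{eq:height}.

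I do not expect a genuine obstacle here; the only care needed is bookkeeping. One must check that every error term is dominated with a margin that is uniform in $Z\ge100$ and $T\ge10$. The delicate points are the sign-indefinite cross terms: $\alpha-462\beta\gamma$ in $x_2$, where $462\beta\gamma$ can be as large as about $1848Z^2$, and the subtraction of $508x_5$ in $x_6$. Both are absorbed because $\varepsilon=10^{-5}$ is tiny, and because $x_5$ is smaller than the main terms of $x_6$ by a factor of order $Z$.
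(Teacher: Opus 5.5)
Your proof is correct and follows the paper's argument for $x_1,\dots,x_5$: isolate the coefficient of $-r$, absorb the $s$-terms via $\varepsilon=10^{-5}$, and absorb the constants via $Z\ge100$, $T\ge10$. The only difference is that you bound $x_6$ from the linear formula \eqref{eq:x6}, using $x_1,x_3>0$ and $508x_5\le25.4Z^2T$, whereas the paper uses the product identity $x_6=(15x_1x_2+31x_3x_4)/(508x_5)$; either works.

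There is one harmless numerical slip. At $Z=100$, $T=10$, $\alpha=Z^2$, $\beta=\gamma=Z$, $r=-T$, $s=\varepsilon T$ one gets $15x_2+31x_4\approx21262\,Z^2T$, so your displayed bound $(15\cdot462+31\cdot463-1)Z^2T$ fails as written. Any absolute constant such as $21000$ in its place gives the same conclusion.
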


\begin{proof}
Write $\rho=-r\in[T,2T]$.  In each of $x_1,x_3,x_5$, the coefficient of $\rho$
after allowing the worst sign of $s$ is at least
\[
 Z-\frac1{15}-2\varepsilon Z>0.99Z.
\]
The corresponding upper bounds are immediate, and the constant $-1$ is
absorbed for $T\ge10$.

For $x_2$, the coefficient of $\rho$ after allowing the worst sign of $s$ is
at least
\[
 462Z^2+Z-\varepsilon(1850Z^2+2Z)>461Z^2.
\]
Its constant term has absolute value at most $1+924Z$, so
$x_2\asymp Z^2T$ uniformly.  Similarly, the coefficient of $\rho$ in $x_4$
is bounded below by
\[
 \alpha+462\beta\gamma+\beta
 -\varepsilon(462\gamma^2+\gamma)
 \ge 463Z^2+Z-\varepsilon(1848Z^2+2Z)>462Z^2,
\]
and its constant term has absolute value at most $1+924Z$.  Hence
$x_4\asymp Z^2T$ uniformly.  Finally, \eqref{eq:product}, which holds
identically on the plane, gives
\[
 x_6=\frac{15x_1x_2+31x_3x_4}{508x_5}\asymp Z^2T.
\]
All implied constants above are absolute, and \eqref{eq:height} follows.
\end{proof}

Let $\ell_i=(\partial_r x_i,\partial_s x_i)$ denote the linear part of
$x_i$.

\begin{lemma}[Finite complexity]\label{lem:complexity}
For $Z\ge100$, the six affine forms in
\eqref{eq:x1}--\eqref{eq:x6} have pairwise nonproportional linear parts.
\end{lemma}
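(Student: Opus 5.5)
The plan is to verify all $\binom62=15$ pairs directly. For each pair I will compute the $2\times2$ determinant $\det(\ell_i,\ell_j)$ as an explicit polynomial in $(\alpha,\beta,\gamma)$, and then show that it is nonzero throughout the box \eqref{eq:parambox} once $Z\ge100$. From \eqref{eq:x1}--\eqref{eq:x5},
\[
\ell_1=(\tfrac1{15}-\beta,\,-\gamma),\quad \ell_3=(-\beta,\,\tfrac1{31}-\gamma),\quad \ell_5=(-\beta,\,-\gamma),
\]
\[
\ell_2=(-462\beta^2-\beta,\ \alpha-462\beta\gamma-\gamma),\quad \ell_4=(-\alpha-462\beta\gamma-\beta,\ -462\gamma^2-\gamma).
\]
From \eqref{eq:x6},
\[
\ell_6=\bigl(15(\ell_1+\ell_2)+31(\ell_3+\ell_4)-508\ell_5\bigr)/508 .
\]
Since $\ell_5$ is the simplest vector, I will take it as the reference and compute against it first. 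Several of these determinants collapse exactly:
\[
\det(\ell_1,\ell_5)=-\gamma/15,\quad \det(\ell_3,\ell_5)=\beta/31,\quad \det(\ell_2,\ell_5)=\alpha\beta,\quad \det(\ell_4,\ell_5)=\alpha\gamma .
\]
All four are nonzero because $\alpha,\beta,\gamma>0$ on the box.

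For the remaining pairs I will argue by dominant terms, using $Z^2\le\alpha\le 2Z^2$ and $Z\le\beta,\gamma\le2Z$. The pair $(\ell_1,\ell_3)$ has determinant $\tfrac1{465}-\beta/31-\gamma/15$, which is at most $-Z/31-Z/15+1<0$. For $(\ell_2,\ell_4)$, the cubic-in-$Z$ cross terms $462^2\beta^2\gamma^2$ cancel exactly, leaving
\[
\det(\ell_2,\ell_4)=\alpha^2+O(Z^3),
\]
and an explicit bound on the $O(Z^3)$ part shows this is positive for $Z\ge100$. The mixed pairs $(\ell_1,\ell_2)$, $(\ell_1,\ell_4)$, $(\ell_3,\ell_2)$, $(\ell_3,\ell_4)$ are handled by writing $\ell_1=\ell_5+(\tfrac1{15},0)$ and $\ell_3=\ell_5+(0,\tfrac1{31})$. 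Bilinearity then gives, for example,
\[
\det(\ell_1,\ell_2)=-\alpha\beta+\tfrac1{15}(\alpha-462\beta\gamma-\gamma).
\]
Here the $\alpha\beta\asymp Z^3$ term dominates the $O(Z^2)$ correction, and the other three mixed pairs behave the same way.

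The pairs involving $\ell_6$ are the step I expect to be the main obstacle, since $\ell_6$ is a combination of everything else. I will expand $508\,\ell_6$ by bilinearity and use the determinants already computed to express each $\det(\ell_j,\ell_6)$ as an explicit polynomial. The leading part of $508\ell_6$ is $15\ell_2+31\ell_4$, which is quadratic in $Z$. Consequently:
\begin{itemize}
\item against $\ell_5$ (and $\ell_1,\ell_3$), the leading term is $15\alpha\beta+31\alpha\gamma>0$, of order $Z^3$, up to lower-order corrections;
\item against $\ell_2$ the leading term is $31\det(\ell_2,\ell_4)\approx31\alpha^2$, and against $\ell_4$ it is $15\det(\ell_4,\ell_2)\approx-15\alpha^2$, both of order $Z^4$.
\end{itemize}
In every case I will bound the lower-order terms crudely in absolute value using the box, and check that they are smaller than the main term by a factor exceeding $1$ once $Z\ge100$. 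This completes the fifteen verifications.
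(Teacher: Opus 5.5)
Your route is the paper's route: compute all fifteen determinants $\det(\ell_i,\ell_j)$ and show none vanishes on \eqref{eq:parambox}. The gap is in how you certify nonvanishing. You bound the leading term from below over the box and the remaining terms crudely in absolute value, relying on $Z^3\gg Z^2$ or $Z^4\gg Z^3$. But the subleading coefficients ($462$, $447$, $14322/15$) are comparable to or larger than $Z$ at the stated threshold $Z=100$, so this comparison fails for several pairs. Using $508\ell_6=15\ell_2+31\ell_4-462\ell_5+(1,1)$ one finds
\[
 508\det(\ell_4,\ell_6)=-15\alpha^2-15\alpha\beta-447\alpha\gamma-\alpha-462\beta\gamma-\beta+462\gamma^2+\gamma .
\]
Here the main term is only guaranteed to be at least $15Z^4=1500Z^3$, while $447\alpha\gamma$ alone can reach $1788Z^3$. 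Similarly,
\[
 508\det(\ell_1,\ell_6)=-(15\alpha\beta+31\alpha\gamma)+\alpha-462\beta\gamma-\tfrac{14322}{15}\gamma^2+\tfrac{431}{15}\gamma-\beta+\tfrac1{15}
\]
pits $46Z^3$ against roughly $5667Z^2$. In your mixed pair $\det(\ell_1,\ell_2)=-\alpha\beta+\frac1{15}(\alpha-462\beta\gamma-\gamma)$, and likewise in $(\ell_1,\ell_4)$, the correction can reach about $123Z^2>Z^3$. As you describe them, your estimates prove the lemma only for $Z$ beyond roughly $125$, not for $Z\ge100$.

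What rescues every failing case is sign, not size. The large subleading terms always have the same sign as the leading term, and the opposing terms are small: $\alpha/15$ in $\det(\ell_1,\ell_2)$; $\alpha+\frac{431}{15}\gamma+\frac1{15}$ in $508\det(\ell_1,\ell_6)$; $462\gamma^2+\gamma$ in $508\det(\ell_4,\ell_6)$. The paper builds this in by factoring each determinant exactly, for example
\[
 \det(\ell_4,\ell_6)=-\frac{(\alpha+\beta-\gamma)(15\alpha+462\gamma+1)}{508},\qquad
 \det(\ell_1,\ell_6)=-\frac{(15\alpha+462\gamma+1)(15\beta+31\gamma-1)}{7620}.
\]
After this, nearly every factor is visibly positive. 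Only $31\alpha-462\beta-1$ and $31\alpha\gamma-\alpha-462\beta\gamma-\beta$ need a genuine inequality, and both hold once $Z\ge30$. To repair your proof, either track signs term by term or adopt these factorizations.

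A smaller point concerns $\det(\ell_2,\ell_4)$. The terms $462^2\beta^2\gamma^2$ are quartic, not cubic. The cubic terms $462\beta^2\gamma$ and $462\beta\gamma^2$ also cancel exactly, leaving $\alpha(\alpha+\beta-\gamma)$. A crude bound would not have survived them had they remained.
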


\begin{proof}
The following table records the nonconstant numerator of each determinant
$\det(\ell_i,\ell_j)$ after removal of its displayed nonzero rational
constant.  The exact constants are listed in Appendix~\ref{app:algebra}.
\begin{center}
\begingroup
\small
\setlength{\tabcolsep}{4pt}
\begin{tabular}{c|c@{\quad}c|c}
\toprule
pair&determinant factor&pair&determinant factor\\
\midrule
$1,2$&$15\alpha\beta-\alpha+462\beta\gamma+\gamma$
&$1,3$&$15\beta+31\gamma-1$\\
$2,3$&$\beta(31\alpha-462\beta-1)$
&$1,4$&$\gamma(15\alpha+462\gamma+1)$\\
$2,4$&$\alpha(\alpha+\beta-\gamma)$
&$3,4$&$31\alpha\gamma-\alpha-462\beta\gamma-\beta$\\
$1,5$&$\gamma$&$2,5$&$\alpha\beta$\\
$3,5$&$\beta$&$4,5$&$\alpha\gamma$\\
$1,6$&$(15\alpha+462\gamma+1)(15\beta+31\gamma-1)$
&$2,6$&$(\alpha+\beta-\gamma)(31\alpha-462\beta-1)$\\
$3,6$&$(31\alpha-462\beta-1)(15\beta+31\gamma-1)$
&$4,6$&$(\alpha+\beta-\gamma)(15\alpha+462\gamma+1)$\\
$5,6$&$15\alpha\beta+31\alpha\gamma+\beta-\gamma$&&\\
\bottomrule
\end{tabular}
\endgroup
\end{center}
Every factor is nonzero in \eqref{eq:parambox}.  For example,
\[
 31\alpha-462\beta-1\ge31Z^2-924Z-1>0,
\]
\[
 \alpha+\beta-\gamma\ge Z^2-Z>0,
\]
and, minimizing successively in $\alpha,\beta,\gamma$,
\[
 31\alpha\gamma-\alpha-462\beta\gamma-\beta
 \ge31Z^3-925Z^2-2Z>0.
\]
The remaining factors are immediately positive from
\eqref{eq:parambox}.  Hence no determinant vanishes.  Since two affine
forms are affinely related precisely when their linear parts are
proportional, this is finite complexity in the sense of Bienvenu's
Definition~1.1.
\end{proof}

\section{Integral points on a rational ruling plane}\label{sec:lattice}

Let
\begin{equation}
 P_0=2\cdot3\cdot5\cdot7\cdot11\cdot31\cdot127,
 \qquad W=508P_0=4619990760.\label{eq:W}
\end{equation}
Take a squarefree integer $q\equiv1\pmod W$ and integers
$A,B,C\in W\Z$ satisfying
\begin{equation}
 qZ^2\le A<2qZ^2,
 \qquad qZ\le B,C<2qZ.\label{eq:ABCbox}
\end{equation}
Put
\[
 \alpha=A/q,\qquad\beta=B/q,\qquad\gamma=C/q.
\]

Write $t=x_5$ and consider the conditions
\begin{align}
 15By_1+31Cy_3+qt&=-q,\label{eq:Haff}\\
 31Ay_3+462Bt&\equiv0\pmod q,\label{eq:phi1}\\
 -15Ay_1+462Ct&\equiv0\pmod q.\label{eq:phi2}
\end{align}
We impose
\begin{equation}
 (ABC,q)=1.\label{eq:units}
\end{equation}

\begin{lemma}[Exact integral lift]\label{lem:integrallift}
An integer triple $(y_1,y_3,t)$ corresponds to an integral point of the
rational ruling plane if and only if \eqref{eq:Haff}--\eqref{eq:phi2}
hold and, after defining
\begin{equation}
 y_2=\frac{31Ay_3+462Bt}{q},
 \qquad
 y_4=\frac{-15Ay_1+462Ct}{q},\label{eq:y2y4lift}
\end{equation}
the integer
\begin{equation}
 \nu_6=15(x_1+x_2)+31(x_3+x_4)-508x_5-462,
 \label{eq:nu6lift}
\end{equation}
where
\begin{equation}
 x_i=y_i+t\quad(1\le i\le4),
 \qquad x_5=t,\label{eq:x15lift}
\end{equation}
is divisible by $508$.  In that case the lift is unique and
\begin{equation}
 x_6=\nu_6/508.\label{eq:x6lift}
\end{equation}
\end{lemma}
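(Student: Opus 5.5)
Throughout, "integral point of the rational ruling plane" means a point $(x_1,\dots,x_6)\in\Z^6$ lying on the plane \eqref{eq:x1}--\eqref{eq:x6} with parameters $\alpha=A/q$, $\beta=B/q$, $\gamma=C/q$. The plan is to rewrite the plane in the coordinates $(y_1,\dots,y_5,\zeta)$ of Section~\ref{sec:quadric} and clear denominators. By construction the plane is cut out by three conditions: $V=A_\theta U$, the affine condition $\zeta-y_5=1$, and the definition \eqref{eq:x6} of $x_6$. In these coordinates $V=A_\theta U$ reads
\[
 y_2=\alpha\,31y_3+\beta\,462y_5,\qquad
 y_4=-\alpha\,15y_1+\gamma\,462y_5,\qquad
 \zeta=-15\beta y_1-31\gamma y_3 .
\]
Multiplying through by $q$ gives $qy_2=31Ay_3+462Bt$ and $qy_4=-15Ay_1+462Ct$, with $t=y_5=x_5$. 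Combining $\zeta=t+1$ with the third equation and multiplying by $q$ gives exactly \eqref{eq:Haff}. The map $(x_1,\dots,x_5)\leftrightarrow(y_1,\dots,y_5)$, with $x_i=y_i+t$, is unimodular, so integrality of $x_1,\dots,x_5$ is equivalent to integrality of $y_1,\dots,y_4,t$.

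For the forward direction, take an integral point of the plane. Then $y_1,y_3,t$ are integers and \eqref{eq:Haff} holds. Since $y_2,y_4\in\Z$, the two cleared equations force \eqref{eq:phi1} and \eqref{eq:phi2}, and $y_2,y_4$ are given by \eqref{eq:y2y4lift}. Next, \eqref{eq:x6} says precisely $508x_6=\nu_6$ with $\nu_6$ as in \eqref{eq:nu6lift}. So $x_6\in\Z$ gives $508\mid\nu_6$ and \eqref{eq:x6lift}.

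For the converse, start from an integer triple $(y_1,y_3,t)$ satisfying \eqref{eq:Haff}--\eqref{eq:phi2} and $508\mid\nu_6$. Congruences \eqref{eq:phi1} and \eqref{eq:phi2} make $y_2,y_4$ in \eqref{eq:y2y4lift} integers. Set $x_1,\dots,x_5$ by \eqref{eq:x15lift}, which are then integers, and set $x_6=\nu_6/508\in\Z$. To see that this point lies on the plane, put $\zeta=t+1$ and check the three conditions:
\begin{enumerate}[label=(\roman*)]
 \item the first two rows of $V=A_\theta U$ hold by \eqref{eq:y2y4lift};
 \item the third row is \eqref{eq:Haff} divided by $q$;
 \item \eqref{eq:x6} holds by \eqref{eq:x6lift}.
\end{enumerate}
Taking $r=15y_1$ and $s=31y_3$ then recovers the parametrization \eqref{eq:x1}--\eqref{eq:x5}, which was derived from these same relations.

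Uniqueness follows because every coordinate $x_1,\dots,x_6$ is a forced function of $(y_1,y_3,t)$, via \eqref{eq:y2y4lift}, \eqref{eq:x15lift} and \eqref{eq:x6lift}. The only step needing care is the identification of the plane's defining equations with the three cleared relations. In particular, one must confirm that the parametrization by $(r,s)$ loses nothing, that is, that $t=x_5$ is determined by $(r,s)$ through \eqref{eq:x5}, which is \eqref{eq:Haff}. This is a direct substitution check rather than a real obstacle. The hypothesis \eqref{eq:units} is not needed for the equivalence itself; it will matter only for the later lattice-index count.
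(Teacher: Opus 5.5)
Your proposal is correct and follows essentially the same route as the paper. You read off the three rows of $V=A_\theta U$, clear the denominator $q$ to obtain \eqref{eq:Haff} together with the two integrality congruences, and identify $508x_6=\nu_6$ with the defining linear relation \eqref{eq:x6}, so that every coordinate is forced by $(y_1,y_3,t)$. Your extra checks — unimodularity of $x\leftrightarrow y$, agreement with \eqref{eq:x1}--\eqref{eq:x5}, and that \eqref{eq:units} is not used — are accurate.
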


\begin{proof}
If an integral point lies on the ruling plane, the graph relation
$V=A_\theta U$ gives \eqref{eq:y2y4lift} and
\[
 t+1=-\frac{15By_1+31Cy_3}{q}.
\]
The latter identity is \eqref{eq:Haff}; integrality of $y_2,y_4$ gives
\eqref{eq:phi1}--\eqref{eq:phi2}; and integrality of $x_6$ is precisely
$508\mid\nu_6$.

Conversely, the two congruences make the quantities in
\eqref{eq:y2y4lift} integers, while \eqref{eq:Haff} gives the third graph
relation.  Thus, with
\[
 U=(15y_1,31y_3,462t),
 \qquad V=(y_2,y_4,t+1),
\]
we have $V=A_\theta U$.  Equations \eqref{eq:x15lift} therefore recover
$x_1,\ldots,x_5$ on the affine ruling plane, and the divisibility condition
makes \eqref{eq:x6lift} an integer satisfying the defining linear equation.
All coordinates are forced by the triple, so the lift is unique.
\end{proof}

\begin{lemma}[Exact projected index]\label{lem:index}
Suppose \eqref{eq:units} holds.  The set of projected integral points
$(y_1,y_3)$ is either empty or an affine translate of a rank-two sublattice
$\Gamma\subset\Z^2$ satisfying
\begin{equation}
 [\Z^2:\Gamma]=q^2\delta
 \qquad\text{for some }\delta\mid508.\label{eq:index}
\end{equation}
\end{lemma}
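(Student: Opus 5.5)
The plan is to treat the three conditions \eqref{eq:Haff}--\eqref{eq:phi2} as defining an affine lattice in $\Z^3$, project it to the $(y_1,y_3)$-plane, and only afterwards impose the condition $508\mid\nu_6$ from Lemma~\ref{lem:integrallift}. Two standing facts are used throughout. Since $q\equiv1\pmod W$ and $q$ is squarefree, $q$ is coprime to $2\cdot3\cdot5\cdot7\cdot11\cdot31\cdot127$; together with \eqref{eq:units}, this makes $15,31,462,A,B,C$ all units modulo every prime $p\mid q$. By Lemma~\ref{lem:integrallift}, $(y_1,y_3)$ determines $t$ through \eqref{eq:Haff} and then determines the whole lift, so the projection $(y_1,y_3,t)\mapsto(y_1,y_3)$ is injective on the relevant set.

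\emph{Step 1: the congruence lattice.} Let $\Lambda\subset\Z^3$ be the set of $(y_1,y_3,t)$ satisfying \eqref{eq:phi1}--\eqref{eq:phi2}. The coefficient rows are $(0,31A,462B)$ and $(-15A,0,462C)$. Modulo each $p\mid q$ they are linearly independent, since the first has $y_1$-coefficient $0$ and the second does not. Hence $[\Z^3:\Lambda]=q^2$ by the Chinese remainder theorem.

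\emph{Step 2: the key degeneracy.} The linear form $f=15By_1+31Cy_3+qt$ appearing in \eqref{eq:Haff} reduces modulo $p$ to the row $(15B,31C,0)$. Append this row to the two rows of Step~1 and compute the $3\times3$ determinant: it equals $15\cdot31\cdot462\,ABC-15\cdot31\cdot462\,ABC=0$. This cancellation is the arithmetic shadow of the skew-symmetry of $A_\theta$. So $f\equiv0\pmod q$ on $\Lambda$, that is, $f(\Lambda)\subseteq q\Z$. I then show $f(\Lambda)=q\Z$. Pick a vector of $\Lambda$ with $t=1$, taking $y_3\equiv-462B(31A)^{-1}$ and $y_1\equiv462C(15A)^{-1}$ modulo $p$. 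Shifting $y_1$ by multiples of $p$ keeps the vector in $\Lambda$ and changes $f$ by multiples of $15Bp$, which lets one adjust $f/q$ to any residue modulo $p$; CRT over $p\mid q$ then gives $f(\Lambda)=q\Z$. Consequently the affine slice $\Lambda\cap\{f=-q\}$ is nonempty. It is a coset of the rank-two lattice $\Lambda_0=\Lambda\cap\ker f$.

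\emph{Step 3: index of the projection.} I will compute $[\Z^2:\pi(\Lambda_0)]$ by a volume argument. The covolume of $\Lambda_0$ inside $\ker f$ equals $\operatorname{covol}(\Lambda)\cdot\|f\|/q=q\|f\|$, using the generator $q$ of $f(\Lambda)$. Projecting $\ker f$ onto the $(y_1,y_3)$-plane multiplies area by $|\partial_t f|/\|f\|=q/\|f\|$. Hence the projected index is $q^2$. Alternatively, one can argue directly: the projection is $\{(y_1,y_3): q\mid 15By_1+31Cy_3\text{ and the induced }t\text{ satisfies \eqref{eq:phi1}}\}$. Here \eqref{eq:phi2} is redundant modulo each $p$ by Step~2, and the two remaining conditions each cut index $p$, for a total of $p^2$.

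\emph{Step 4: the $508$-condition.} Substituting $t$ and \eqref{eq:y2y4lift} into \eqref{eq:nu6lift} makes $\nu_6$ an affine function of $(y_1,y_3)$ on this coset, with rational coefficients whose denominators are powers of $q$. Since $q\equiv1\pmod{508}$, these reduce to an affine congruence modulo $508$ on $\Z^2$. On a coset of $\pi(\Lambda_0)$ such a condition is either never satisfied, giving the empty case, or holds on a coset of a sublattice of index $\delta\mid508$. Because $(508,q)=1$, the indices multiply, giving $[\Z^2:\Gamma]=q^2\delta$.

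I expect the main obstacle to be Step~2, specifically showing that $f(\Lambda)$ is exactly $q\Z$ rather than $q^2\Z$. This requires tracking the lift of $t$ modulo $p^2$ carefully; the degeneracy computation itself is mechanical.
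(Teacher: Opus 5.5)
Your route is sound and genuinely different from the paper's, but one step needs repair. In Step~2, the justification you give for $f(\Lambda)=q\Z$ does not work as written. If $q$ has a second prime factor, shifting $y_1$ by $p$ changes $-15Ay_1+462Ct$ by $-15Ap\not\equiv0\pmod q$, so the shifted vector leaves $\Lambda$; and $f/q$ changes by the non-integer $15Bp/q$. Moreover, controlling $f/q$ modulo the primes $p\mid q$ says nothing about primes $\ell\nmid q$.

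The claim itself is immediate and needs no work modulo $p^2$. Since $q\Z^3\subseteq\Lambda$, the group $f(\Lambda)$ contains $f(q,0,0)=15Bq$ and $f(0,0,q)=q^2$. Their greatest common divisor is $q$ because $(15B,q)=1$. Together with $f(\Lambda)\subseteq q\Z$ from your determinant computation, this gives $f(\Lambda)=q\Z$.

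With that fix, your Step~3 is correct. Without volumes, it reads $[K:\Lambda_0]=[\Z^3:\Lambda]/[f(\Z^3):f(\Lambda)]=q^2/q=q$ for $K=\ker f\cap\Z^3$, whose projection has index $q$. Your Step~4 agrees with the paper's handling of the $508$-condition. Your ``alternative'' count in Step~3 asserts without proof that the induced condition \eqref{eq:phi1} cuts index $p$ on the projected kernel. That nontriviality is exactly what the paper checks: the point $(y_1,y_3)=(q,0)$ with $t=-15B$ gives $\widetilde\phi=-6930B^2$, a unit modulo $q$.

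As for the comparison, the paper works inside the saturated kernel $K$ from the start. It shows \eqref{eq:phi1} and \eqref{eq:phi2} are equivalent there via $B(-15Ay_1+462Ct)-C(31Ay_3+462Bt)=-A(15By_1+31Cy_3)$, and writes down the explicit basis $v_1=(c,1,-k_0)$, $v_2=(q,0,-15B)$. It then gets index $q$ for the remaining congruence from the unit value $\widetilde\phi(v_2)=-6930B^2$.

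You instead impose both congruences on all of $\Z^3$, which gives index $q^2$ trivially. You then observe the rank drop; your vanishing determinant is the paper's identity in another guise. Finally, you transfer the index to the hyperplane through $f(\Lambda)=q\Z$. Your version is basis-free, symmetric in the two congruences, and, once repaired, slightly shorter. The paper's explicit basis and the unit $-6930B^2$ are reused in Lemma~\ref{lem:specialfibre} to build the basis $w_1=v_1+zv_2$, $w_2=qv_2$ of $L_0$ and to compute its reduction modulo $p\mid q$. That reuse is what the paper's more concrete route buys.
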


\begin{proof}
First omit the final congruence modulo $508$.  On the homogeneous kernel of
\eqref{eq:Haff}, the two congruences \eqref{eq:phi1} and
\eqref{eq:phi2} are equivalent, because
\[
 B(-15Ay_1+462Ct)-C(31Ay_3+462Bt)
 =-A(15By_1+31Cy_3).
\]
Set $b_1=15B$ and $b_2=31C$.  Since $(b_1,q)=1$, choose integers $c,k_0$
with
\[
 b_1c+b_2=qk_0.
\]
Then
\[
 v_1=(c,1,-k_0),\qquad v_2=(q,0,-b_1)
\]
form a basis of the primitive kernel of
$b_1y_1+b_2y_3+qt=0$, since
\[
 v_1\times v_2=-(b_1,b_2,q).
\]
Define the integer-valued linear form
\[
 \widetilde\phi(y_1,y_3,t)=31Ay_3+462Bt
\]
and let $\phi$ be its reduction modulo $q$.  We have
\[
 \widetilde\phi(v_2)=-6930B^2.
\]
Every prime divisor of $6930$ divides $W$, while $(B,q)=1$ and
$(q,W)=1$.  Thus this value is a unit modulo $q$, so $\phi$ is surjective
and $\ker\phi$ has exact index $q$ in the primitive hyperplane kernel.

Projection of the primitive hyperplane kernel to the $(y_1,y_3)$-plane has
image
\[
 \{(y_1,y_3):b_1y_1+b_2y_3\equiv0\pmod q\},
\]
which has exact index $q$ in $\Z^2$.  Projection is injective, because $t$
is then uniquely determined.  Hence the projected lattice before the $x_6$
condition has exact index $q^2$.  The condition that the numerator in
\eqref{eq:x6} be divisible by $508$ is one affine congruence on this
lattice.  If soluble, its direction kernel has index equal to the order of
a subgroup of $\Z/508\Z$, and therefore index $\delta$ for some $\delta\mid508$.
This proves \eqref{eq:index}.
\end{proof}

\begin{lemma}[Nonemptiness and a fixed local witness]\label{lem:witness}
Assume $(C,q)=1$.  The affine lattice in Lemma~\ref{lem:index} is nonempty.
Moreover, for every prime $p\mid W$ it contains a point at which all six
coordinates $x_i$ are nonzero modulo $p$.
\end{lemma}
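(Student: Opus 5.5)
The plan is to work entirely modulo the finitely many relevant moduli. The special shape of the data in \eqref{eq:ABCbox} and $q\equiv1\pmod W$ makes the local behaviour at primes dividing $W$ completely explicit. Two facts drive everything. First, modulo $W$ we have $A\equiv B\equiv C\equiv0$ and $q\equiv1$. Second, $q$ is coprime to $W$, and hence also to $508$. Nonemptiness will come from the Chinese remainder theorem together with the lattice structure already found in the proof of Lemma~\ref{lem:index}. The witness will come from reading off all six coordinates modulo $p$ in terms of $(y_1,y_3)$ alone.

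\emph{Nonemptiness.} I would proceed in four steps.
\begin{enumerate}[label=(\roman*)]
\item Since $(C,q)=1$ and $31\nmid q$, we get $\gcd(15B,31C,q)=1$, so \eqref{eq:Haff} has an integer solution $(y_1,y_3,t)$.
\item On the affine solution set of \eqref{eq:Haff}, the two congruences \eqref{eq:phi1}, \eqref{eq:phi2} are still equivalent modulo $q$. Indeed,
\[
B\cdot(-15Ay_1+462Ct)-C\cdot(31Ay_3+462Bt)=-A(15By_1+31Cy_3)=Aq(t+1)\equiv0 \pmod q,
\]
and $B$, $C$ are units modulo $q$.
\item The form $\phi$ takes the unit value $-6930B^2$ on $v_2$ (as in the proof of Lemma~\ref{lem:index}). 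So adding a suitable multiple of $v_2$ enforces \eqref{eq:phi1}, and hence \eqref{eq:phi2}.
\item It remains to impose $508\mid\nu_6$. I would move along $q\Z v_1+q\Z v_2$, which preserves \eqref{eq:Haff} and both congruences modulo $q$. Since $q$ is prime to $508W$, these moves realize every residue class of $(y_1,y_3)$ modulo $508W$. In particular they reach every residue class modulo $508$, and this is enough for the residue computation below.
\end{enumerate}

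\emph{Residues modulo divisors of $W$.} Reducing \eqref{eq:Haff} modulo $W$ gives $t\equiv-1$. The relation $qy_2=31Ay_3+462Bt\equiv0\pmod W$, together with $(q,W)=1$, gives $y_2\equiv0$, and similarly $y_4\equiv0$. Hence, modulo $W$,
\[
x_5\equiv x_2\equiv x_4\equiv-1,\qquad x_1\equiv y_1-1,\qquad x_3\equiv y_3-1,\qquad \nu_6\equiv 15y_1+31y_3-46.
\]
The same computation works modulo $508W$ for the quantity needed to control $x_6=\nu_6/508$ modulo $p$, including $p=127$, where $127^2\mid W$.

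Three of the six coordinates are therefore automatically $\equiv-1$, and everything reduces to choosing residues of $(y_1,y_3)$. I would take $y_3\equiv0\pmod{508}$ and $y_1$ a fixed even solution $y_1^{(0)}$ of $15y_1\equiv46\pmod{508}$; one exists since $15$ is invertible modulo $508$, and every solution is even. Then I vary
\[
y_1=y_1^{(0)}+508j,\qquad y_3=508i .
\]
Under these moves $x_6$ changes by $15j+31i$, $x_1$ changes by $508j$, and $x_3=508i-1$.

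Fixing one prime $p\mid W$, I would pick $(i,j)$ modulo $p$ as follows.
\begin{itemize}
\item For $p=2$: $x_1$ and $x_3$ are automatically odd, and the parity of $i+j$ fixes the parity of $x_6$.
\item For $p=3,5$: use $i$, since $31$ is a unit, to make $x_6\not\equiv0$.
\item For $p\ge7$: use $j$. Each of $x_1\not\equiv0$ and $x_6\not\equiv0$ excludes at most one residue of $j$, and $p\ge7$ leaves room.
\end{itemize}
The resulting residue class of $(y_1,y_3)$ is reachable by the moves in step (iv), which gives a lattice point with all six coordinates nonzero modulo $p$.

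I expect the main obstacle to be bookkeeping rather than ideas. One must check that the moves along $qv_1,qv_2$ really do not disturb the residues of $t$, $y_2$, $y_4$ modulo $508W$; this holds because all the relevant coefficients lie in $W\Z$ and $q$ is a unit there. The other point needing care is the prime $127$, where division by $508$ requires working modulo $127^2$.
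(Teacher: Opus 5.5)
Your strategy is workable and differs from the paper's, but the final step, the prime-by-prime choice of $(i,j)$, does not produce the witness as written. This is a bookkeeping gap, not a missing idea.

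Modulo $W$ you correctly get $x_2\equiv x_4\equiv x_5\equiv-1$. So at each $p\mid W$ you must keep three quantities nonzero simultaneously: $x_1\equiv y_1^{(0)}-1+508j$, $x_3\equiv508i-1$, and $x_6\equiv x_6^{(0)}+15j+31i\pmod{P_0}$. Your bullets never control all three.
\begin{itemize}
\item For $p\in\{7,11,31\}$ you choose only $j$. But $508$ is a unit there, so $x_3$ vanishes for one residue of $i$ (e.g.\ $i\equiv2\pmod 7$), and nothing excludes it.
\item For $p\in\{3,5\}$ you choose only $i$. But $x_1$ vanishes for one residue of $j$; with $y_1^{(0)}=274$ one has $x_1\equiv j\pmod 3$.
\item At $p=127$, the claim that $x_1\not\equiv0$ excludes at most one residue of $j$ is unjustified. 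Since $127\mid508$, $x_1\equiv y_1^{(0)}-1$ is independent of $j$. You must check separately that $y_1^{(0)}\not\equiv1\pmod{127}$; this holds because $15\not\equiv46\pmod{127}$.
\end{itemize}
The repair is easy. Take $i=0$, so $x_3\equiv-1$. For $p\in\{3,5\}$ one has $15\equiv0$ and $31\equiv46\equiv1$, hence $x_6\equiv508^{-1}x_3\not\equiv0$, and $j$ need only avoid one residue (for $x_1$). For $p\in\{7,11,31\}$, $j$ avoids at most two residues. At $p=127$ it avoids one (for $x_6$, since $15$ is a unit). At $p=2$ it fixes the parity of $x_6$. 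A CRT choice of $j$ modulo $P_0$ then even gives a single point good for every $p\mid W$.

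Two smaller points. First, your statements about residues modulo $508W$ are false. You claim the computation works there and that the moves along $qv_1,qv_2$ do not disturb $t,y_2,y_4$ modulo $508W$. But a move by $qv_2$ changes $t$ by $-15qB$, which vanishes modulo $W$ but not in general modulo $508W$. They are also unnecessary. Every lattice point has $t\equiv-1$ and $y_2\equiv y_4\equiv0\pmod W$, and $508p\mid W$ for each prime $p\mid W$, so residues modulo $W$ already determine $x_6\bmod p$.

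Second, steps (ii)--(iii), and the very existence of $v_1$, use $(B,q)=1$. That comes from \eqref{eq:units}, not from the lemma's stated hypothesis $(C,q)=1$.

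The paper avoids all of this with one explicit point: $y_1=0$, $y_3=qs$, $t=-1-31Cs$, where $s\equiv3918\pmod W$ and $31Cs\equiv-1\pmod q$. Since $q\mid t$, both congruences hold outright using only $(C,q)=1$. The coordinates are $\equiv(-1,-1,3917,-1,-1,239)$ modulo every $p\mid W$ at once, so the witness reduces to $\gcd(3917\cdot239,W)=1$.

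Once repaired, your route buys something the paper's does not: a complete description of the fibre modulo $W$. Every class of $(y_1,y_3)$ with $15y_1+31y_3\equiv46\pmod{508}$ occurs, not just a single unit point.
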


\begin{proof}
Put
\[
 s_*=3918=362+508\cdot7.
\]
Choose $s$ by the Chinese remainder theorem so that
\begin{equation}
 s\equiv s_*\pmod W,
 \qquad 31Cs\equiv-1\pmod q.\label{eq:sCRT}
\end{equation}
Set
\[
 y_1=0,\qquad y_3=qs,\qquad t=-1-31Cs.
\]
The second congruence in \eqref{eq:sCRT} gives $q\mid t$.
Equations \eqref{eq:Haff}--\eqref{eq:phi2} hold, and the exact graph
relations give
\[
 y_2=31As+462B(t/q),\qquad y_4=462C(t/q),
\]
so $y_2,y_4$ are integers.  Since $A,B,C\in W\Z$,
$q\equiv1\pmod W$, $s\equiv3918\pmod W$, and $t\equiv-1\pmod W$,
we obtain
\[
 (x_1,x_2,x_3,x_4,x_5)\equiv(-1,-1,3917,-1,-1)\pmod W.
\]
Let $\nu$ be the numerator in \eqref{eq:x6}.  Then
\[
 \nu\equiv15(-2)+31(3917-1)-508(-1)-462
 =508\cdot239\pmod W.
\]
As $W=508P_0$, it follows that
\[
 x_6=239+P_0m
\]
for some integer $m$.  Every prime divisor of $W$ also divides $P_0$, so
for every $p\mid W$,
\begin{equation}
 (x_1,\dots,x_6)\equiv(-1,-1,3917,-1,-1,239)\pmod p.
 \label{eq:smallwitness}
\end{equation}
Indeed,
\[
 \gcd(3917\cdot239,W)=1,
\]
so each displayed coordinate is a unit modulo every prime divisor of $W$.
\end{proof}

\begin{lemma}[Uniform lattice coordinates]\label{lem:coordinates}
Let the affine lattice be nonempty, and put
$\Delta=[\Z^2:\Gamma]$.  There are absolute constants $C_0,C_1$ such that,
if $T\ge C_0\Delta$, the affine lattice admits a bijective parametrization
by $n\in\Z^2$ with the following properties:
\begin{enumerate}[label=\textup{(\roman*)}]
\item the pullback $\mathcal K$ of $\mathcal R_T$ is a convex body contained
in $[-C_1T,C_1T]^2$;
\item
\begin{equation}
 \Vol(\mathcal K)=\frac{\operatorname{area}(\mathcal R_T)}
 {15\cdot31\cdot\Delta}
 \asymp \frac{T^2}{\Delta}
 \asymp\frac{T^2}{q^2};\label{eq:Kvolume}
\end{equation}
\item each $x_i$ becomes an integer affine-linear form in $n$, with every
linear coefficient and constant term bounded by
\begin{equation}
 O(Z^2\Delta)=O(Z^2q^2).\label{eq:coeffbound}
\end{equation}
\end{enumerate}
The determinant of the linear map $n\mapsto(r,s)$ has absolute value
$15\cdot31\cdot\Delta$.  All implied constants are absolute.
\end{lemma}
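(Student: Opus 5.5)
Choose a reduced basis of $\Gamma$ and write the affine lattice as $\{w_0+Gn:n\in\Z^2\}$, where $G\in M_2(\Z)$ has columns a basis of $\Gamma$ and $w_0$ is a suitable base point. Part (ii) and the determinant claim depend only on how $(r,s)$ relates to $(y_1,y_3)$. Since $r=15y_1$ and $s=31y_3$, the map $n\mapsto(r,s)$ is $n\mapsto D_0(w_0+Gn)$ with $D_0=\mathrm{diag}(15,31)$. Its linear part has determinant $15\cdot31\cdot\det G=\pm15\cdot31\cdot\Delta$. Hence the pullback $\mathcal K$ of the convex set $\mathcal R_T$ under this affine bijection $\R^2\to\R^2$ is convex, and its volume is $\operatorname{area}(\mathcal R_T)/(15\cdot31\cdot\Delta)$. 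We have $\operatorname{area}(\mathcal R_T)=3\varepsilon T^2$. By Lemma~\ref{lem:index}, $\Delta=q^2\delta$ with $\delta\mid508$, which gives $\Delta\asymp q^2$ and proves \eqref{eq:Kvolume}.

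For (i) and (iii) we control the sizes of $G$ and $w_0$. Take $G$ Lagrange--Gauss reduced, with columns $g_1,g_2$ of lengths $\lambda_1\le\lambda_2$. Since $\Delta\Z^2\subset\Gamma$, we get $\lambda_2\le\sqrt2\,\Delta$ and $\lambda_1\lambda_2\asymp\Delta$. For $w_0$, pick any point of the affine lattice and reduce it modulo $\Gamma$ into a fundamental parallelogram; this gives $|w_0|\le\lambda_1+\lambda_2\ll\Delta$. For (i) we bound $\|G^{-1}\|$. Using Cramer's rule, $\|G^{-1}\|\le\lambda_2/\Delta\ll1$, since the entries of $G^{-1}$ are entries of $G$ divided by $\pm\Delta$. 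Every point of $\mathcal R_T$ has $|(y_1,y_3)|\ll T$, and $|w_0|\ll\Delta\le T/C_0$. So $n=G^{-1}((y_1,y_3)-w_0)$ satisfies $|n|\ll T$, which places $\mathcal K$ in a box $[-C_1T,C_1T]^2$.

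For (iii), Lemma~\ref{lem:integrallift} writes every coordinate as an explicit function of $(y_1,y_3)$ that is affine and integer-valued on the lattice. First, $t=-1-(15By_1+31Cy_3)/q$. Next come $y_2$ and $y_4$ from \eqref{eq:y2y4lift}, then $x_i=y_i+t$, and finally $x_6=\nu_6/508$. Equivalently, these are \eqref{eq:x1}--\eqref{eq:x6} with $r=15y_1$ and $s=31y_3$. In those formulas every coefficient is $O(Z^2)$, using $\alpha\asymp Z^2$ and $\beta,\gamma\asymp Z$, and every constant is $O(Z)$. Composing with $(y_1,y_3)=w_0+Gn$ gives linear coefficients of size $O(Z^2\lambda_2)=O(Z^2\Delta)$. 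The constant term is the value at $w_0$, of size $O(Z^2|w_0|+Z)=O(Z^2\Delta)$. Integrality of the coefficients comes from Lemma~\ref{lem:integrallift}: each $x_i$ is integer-valued on the lattice. An affine map $\Z^2\to\R$ that is integer-valued on $\Z^2$ has integer constant term (its value at $n=0$) and integer linear coefficients (its values at $e_1,e_2$ minus the constant).

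The main obstacle is keeping the constants absolute and uniform in $q$ and $Z$. The only genuinely quantitative input is reduction theory, namely the bounds on $\lambda_2$, $|w_0|$ and $\|G^{-1}\|$. One must also not claim the unnecessary estimate $\lambda_1\gg\sqrt\Delta$, which can fail for skew lattices; the argument above avoids it by using only the upper bound on $\lambda_2$. The hypothesis $T\ge C_0\Delta$ is used exactly once, to absorb the base-point shift $w_0$ into the $O(T)$ bound in (i).
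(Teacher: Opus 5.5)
Your proposal is correct and follows essentially the same route as the paper. Both arguments take a Gauss-reduced basis of $\Gamma$ with $|g_2|=O(\Delta)$ and a base point in a fundamental parallelogram. Both then use a uniform bound $\|G^{-1}\|\ll1$; your Cramer's-rule bound is the paper's $\sigma_{\min}(G)=|\det G|/\sigma_{\max}(G)\gg1$ in another form. The rest also matches: change of variables for the volume, composing the $O(Z^2)$ coefficients with the $O(\Delta)$-sized basis and base point, and reading integrality off values at lattice points.

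The only cosmetic difference is how $|g_2|$ is bounded. You use $\Delta\Z^2\subset\Gamma$, which actually gives $\lambda_2\le\Delta$. The paper instead uses $|g_1|\ge1$ together with the reduced-angle bound.
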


\begin{proof}
Let $\widetilde\Gamma$ denote the full homogeneous direction lattice,
written in the coordinate space
\[
 (y_1,y_2,y_3,y_4,t,x_6),
\]
after every integrality congruence has been imposed.  Projection to
$(y_1,y_3)$ is a bijection
\[
 \pi:\widetilde\Gamma\longrightarrow\Gamma.
\]
Indeed, the homogeneous version of \eqref{eq:Haff} determines $t$ uniquely
from $(y_1,y_3)$; the two exact graph equations then determine $y_2,y_4$;
and $x_6$ is determined by \eqref{eq:x6}.  Existence of a lift is part of
the definition of $\Gamma$, and these uniqueness statements show that it is
unique.

Choose a Gauss-reduced basis $g_1,g_2$ of $\Gamma$.  Thus
\[
 |g_1|\le |g_2|,\qquad
 |\det(g_1,g_2)|=\Delta,
 \qquad |\sin\angle(g_1,g_2)|\ge\frac{\sqrt3}{2}.
\]
Since $g_1$ is a nonzero integer vector, $|g_1|\ge1$, and hence
\[
 |g_2|
 \le \frac{2\Delta}{\sqrt3\,|g_1|}
 \le\frac{2\Delta}{\sqrt3}.
\]
If $G$ is the matrix with columns $g_1,g_2$, then its largest singular value
is $O(\Delta)$, whereas
\[
 \sigma_{\min}(G)=\frac{|\det G|}{\sigma_{\max}(G)}\gg1.
\]
Let $\widetilde g_i$ be the unique lift of $g_i$ to
$\widetilde\Gamma$.  Choose a point $z_0$ of the affine lattice whose
projection lies in the centered fundamental parallelogram
\[
 \left\{t_1g_1+t_2g_2:-\tfrac12\le t_1,t_2<\tfrac12\right\}.
\]
Then its projected norm is at most $(|g_1|+|g_2|)/2=O(\Delta)$.  The map
\[
 n=(n_1,n_2)\longmapsto z_0+n_1\widetilde g_1+n_2\widetilde g_2
\]
is a bijection from $\Z^2$ to the full affine lattice.

The region $\mathcal R_T$ is expressed in
\[
 (r,s)=(15y_1,31y_3).
\]
The linear part of the map $n\mapsto(r,s)$ is
$\operatorname{diag}(15,31)G$, whose determinant has absolute value
$15\cdot31\cdot\Delta$.  The singular-value bounds above and the estimate
for the affine representative imply, when $T\ge C_0\Delta$, that the
inverse image $\mathcal K$ is contained in $[-C_1T,C_1T]^2$.  It is convex
because $\mathcal R_T$ is convex.  Change of variables gives the exact area
identity in \eqref{eq:Kvolume}.

Writing $\alpha=A/q$, $\beta=B/q$, and $\gamma=C/q$, the exact
graph equations, now expanded as affine functions of $(y_1,y_3)$, are
\begin{align}
 t&=-1-15\beta y_1-31\gamma y_3,\label{eq:texpanded}\\
 y_2&=-462\beta-6930\beta^2y_1
 +(31\alpha-14322\beta\gamma)y_3,\label{eq:y2expanded}\\
 y_4&=-462\gamma+(-15\alpha-6930\beta\gamma)y_1
 -14322\gamma^2y_3.\label{eq:y4expanded}
\end{align}
Since $x_i=y_i+t$ for $1\le i\le4$, $x_5=t$, and $x_6$ is the fixed
linear combination in \eqref{eq:x6}, every coefficient and every constant
term of
\[
 t,\ y_2,\ y_4,\ x_1,\ldots,x_6
\]
as an affine function of $(y_1,y_3)$ is $O(Z^2)$.  The projected basis
vectors and the centered affine representative have norm $O(\Delta)$, so
composition with the preceding parametrization gives
\eqref{eq:coeffbound}, for both the linear coefficients and the affine
constants.  These pulled-back coefficients are integers: the value at
$n=0$ is integral, and the coefficient in the $e_j$ direction is the
difference of the same integer coordinate at two points of the full
integral lattice.
\end{proof}

\section{Many good denominators and ruling planes}\label{sec:sieve}

For every positive integer $q$ coprime to $W$, put
\[
 H(q)=\sum_{p\mid q}\frac1p.
\]

\begin{lemma}[Positive-density denominators]\label{lem:denominators}
There are constants $M_0,c_W>0$ and $Q_0\ge1$ such that, for every
$Q\ge Q_0$,
\begin{equation}
 \#\{q\in[Q,2Q]:q\equiv1\pmod W,\ \mu^2(q)=1,\ H(q)\le M_0\}
 \ge c_WQ.\label{eq:goodq}
\end{equation}
\end{lemma}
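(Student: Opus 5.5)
We argue by first counting squarefree $q\in[Q,2Q]$ with $q\equiv1\pmod W$, and then discarding the few with large $H(q)$ using a first-moment (Markov) bound. Neither step needs anything deep.

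\emph{Step 1 (lower bound for squarefree $q$ in the progression).} Every $q\equiv1\pmod W$ is coprime to $W$. A standard sieve by squares gives
\[
 \#\{q\le X: q\equiv1\ (W),\ \mu^2(q)=1\}
 =\sum_{\substack{d\le\sqrt X\\(d,W)=1}}\mu(d)\Bigl(\frac{X}{Wd^2}+O(1)\Bigr).
\]
The main term is $\frac{X}{W}\prod_{p\nmid W}(1-p^{-2})+O(\sqrt X)$, and the product is a positive constant depending only on $W$. Subtracting the counts at $X=2Q$ and $X=Q$ gives at least $c_1Q$ such $q$ in $[Q,2Q]$ for $Q\ge Q_0$, with $c_1=\frac{1}{W}\prod_{p\nmid W}(1-p^{-2})>0$ up to the error. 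An even more elementary route works: count all $q\equiv1\pmod W$ in $[Q,2Q]$, which is about $Q/W$. Then remove, for each prime $p\nmid W$, those divisible by $p^2$, which number at most $Q/(Wp^2)+1$ for $p\le\sqrt{2Q}$. Since $\sum_{p>11}p^{-2}<1/2$, this leaves at least $Q/(2W)-O(\sqrt Q)$.

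\emph{Step 2 (average of $H$).} We bound
\[
 \sum_{\substack{Q\le q\le2Q\\ q\equiv1\,(W)}}H(q)
 =\sum_{p\nmid W}\frac1p\,\#\{q\in[Q,2Q]:q\equiv1\ (W),\ p\mid q\}.
\]
For each $p\le2Q$ the inner count is at most $Q/(Wp)+1$ by the Chinese remainder theorem. Hence the sum is at most
\[
 \frac{Q}{W}\sum_p p^{-2}+\sum_{p\le2Q}\frac1p\le \frac{Q}{W}+O(\log\log Q).
\]
By Markov's inequality, the number of $q$ in the progression with $H(q)>M_0$ is at most $(Q/W+O(\log\log Q))/M_0$.

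\emph{Step 3 (choice of constants).} Take $M_0=4$. The bad set then has size at most $Q/(4W)+o(Q)$, while the good squarefree set from Step 1 has size at least $Q/(2W)-O(\sqrt Q)$. So \eqref{eq:goodq} holds with $c_W=1/(8W)$ once $Q\ge Q_0$ is large.

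The only delicate point is keeping the constant in Step 1 strictly larger than the Markov loss. That is why $M_0$ is chosen after the Step 1 constant is fixed. The argument is otherwise routine, since $H(q)$ is small on average: its mean is about $\sum_p p^{-2}$.
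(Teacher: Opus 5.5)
Your proposal is correct and follows essentially the same route as the paper. It counts squarefree $q\equiv1\pmod W$ in $[Q,2Q]$ via the square-sieve, bounds the first moment $\sum H(q)$ over the progression by $O(Q)$, and discards large-$H$ values by Markov with $M_0$ fixed after the squarefree density constant. Your explicit choice $M_0=4$, $c_W=1/(8W)$ is a concrete instance of the paper's $M_0=4C_H/c_{\rm sf}$, $c_W=c_{\rm sf}/2$.
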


\begin{proof}
Using $\mu^2(q)=\sum_{e^2\mid q}\mu(e)$ and observing that
$q\equiv1\pmod W$ forces $(q,W)=1$, the Chinese remainder theorem gives
\begin{align*}
 &\#\{q\in[Q,2Q]:q\equiv1\pmod W,\ \mu^2(q)=1\}\\
 &\quad=\sum_{\substack{e\le\sqrt{2Q}\\(e,W)=1}}\mu(e)
 \left(\frac{Q}{We^2}+O(1)\right).
\end{align*}
The main term is
\begin{align*}
 \frac QW\sum_{\substack{e\le\sqrt{2Q}\\(e,W)=1}}
 \frac{\mu(e)}{e^2}
 &=\frac QW\prod_{p\nmid W}(1-p^{-2})
 +O_W\left(Q\sum_{e>\sqrt{2Q}}\frac1{e^2}\right)\\
 &=\frac QW\prod_{p\nmid W}(1-p^{-2})+O_W(Q^{1/2}),
\end{align*}
and the accumulated $O(1)$ errors also contribute $O(Q^{1/2})$.
Therefore
\begin{equation}
 \begin{aligned}
 &\#\{q\in[Q,2Q]:q\equiv1\pmod W,\ \mu^2(q)=1\}\\
 &\qquad=\frac QW\prod_{p\nmid W}(1-p^{-2})+O_W(Q^{1/2}).
 \end{aligned}
 \label{eq:squarefreeprogression}
\end{equation}
Also there is a constant $C_H=C_H(W)>0$ such that, for all sufficiently
large $Q$,
\[
 \sum_{\substack{Q\le q\le2Q\\q\equiv1\ (W)}}H(q)
 \le\sum_{\substack{p\le2Q\\p\nmid W}}
 \frac1p\left(\frac{Q}{Wp}+1\right)
 \le C_HQ.
\]
By \eqref{eq:squarefreeprogression}, there is a constant
$c_{\rm sf}=c_{\rm sf}(W)>0$ such that the squarefree progression contains
at least $c_{\rm sf}Q$ integers for all sufficiently large $Q$.  Choose
\[
 M_0=\frac{4C_H}{c_{\rm sf}}.
\]
Since $H(q)\ge0$, Markov's inequality gives
\[
 \#\{q\in[Q,2Q]:q\equiv1\pmod W,\ H(q)>M_0\}
 \le\frac{C_HQ}{M_0}=\frac{c_{\rm sf}Q}{4}.
\]
This upper bound remains valid after restricting to squarefree $q$.
Consequently at least $3c_{\rm sf}Q/4$ squarefree denominators satisfy
$H(q)\le M_0$.  Taking $c_W=c_{\rm sf}/2$ and enlarging $Q_0$ proves
\eqref{eq:goodq}.
\end{proof}

For $p\mid q$, impose
\begin{equation}
 ABC(462C+15A)(31A-462B)(15B+31C)\not\equiv0\pmod p.
 \label{eq:qconditions}
\end{equation}
For $p\nmid Wq$, define the three bad systems
\begin{align}
 A&\equiv0, &462B+q&\equiv0 \pmod p,\label{eq:bad1}\\
 A&\equiv0, &462C+q&\equiv0 \pmod p,\label{eq:bad2}\\
 31A-462B-q&\equiv0,
 &15A+462C+q&\equiv0 \pmod p.\label{eq:bad3}
\end{align}

\begin{lemma}[Complete residue blocks]\label{lem:boxcount}
Let $I_1,I_2,I_3$ be finite intervals of consecutive integers, with
$L_i=|I_i|$, and let
\[
 \mathcal B=I_1\times I_2\times I_3.
\]
For an integer $m\ge1$ and a set
$\Omega\subseteq(\Z/m\Z)^3$,
\begin{equation}
 \#\{x\in\mathcal B:x\bmod m\in\Omega\}
 \ge |\Omega|\prod_{i=1}^3
 \max\left\{\left\lfloor\frac{L_i}{m}\right\rfloor-1,0\right\}.
 \label{eq:blocklower}
\end{equation}
In particular, if $L_i\ge4m$ for all $i$, then
\begin{equation}
 \#\{x\in\mathcal B:x\bmod m\in\Omega\}
 \ge \frac{|\Omega|}{8m^3}L_1L_2L_3.
 \label{eq:blocklower2}
\end{equation}
\end{lemma}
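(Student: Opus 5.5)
Since each $I_i$ consists of $L_i$ consecutive integers, it contains at least $\lfloor L_i/m\rfloor$ pairwise disjoint blocks of $m$ consecutive integers. Each such block is a complete residue system modulo $m$. It is convenient (and harmless, since we only need a lower bound) to use $k_i:=\max\{\lfloor L_i/m\rfloor-1,0\}$ blocks in the $i$-th coordinate; this slack also absorbs any endpoint conventions from half-open intervals. Taking products, $\mathcal B$ contains $k_1k_2k_3$ pairwise disjoint sub-boxes $J_1\times J_2\times J_3$, where each $J_i$ is a complete residue system modulo $m$.

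On any such sub-box, the reduction map $x\mapsto x\bmod m$ is a bijection onto $(\Z/m\Z)^3$: it is a product of three bijections $J_i\to\Z/m\Z$. Hence each sub-box contains exactly $|\Omega|$ points whose reduction lies in $\Omega$. The sub-boxes are disjoint and all lie in $\mathcal B$, so summing over them gives \eqref{eq:blocklower}. If some $k_i=0$, the right side vanishes and there is nothing to prove.

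For \eqref{eq:blocklower2}, assume $L_i\ge4m$ and put $u=L_i/m\ge4$. Then
\[
\lfloor u\rfloor-1\ge u-2\ge u/2,
\]
where the last step uses $u\ge4$. So each factor in \eqref{eq:blocklower} is at least $L_i/(2m)$, and the product is at least $L_1L_2L_3/(8m^3)$.

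There is no genuine obstacle here. The only point needing care is to make the floor-minus-one bookkeeping explicit, so that the estimate stays valid uniformly regardless of how the intervals sit relative to the residue classes.
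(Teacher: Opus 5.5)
Your proof is correct and takes essentially the same route as the paper. Both arguments use disjoint complete residue blocks in each coordinate, note that each product of blocks meets every residue triple exactly once, and finish with $\lfloor u\rfloor-1\ge u/2$ for $u\ge4$. The one difference is minor. The paper requires its blocks to start at multiples of $m$, and that requirement is where its $-1$ comes from. You observe that any $m$ consecutive integers already form a complete residue system, so in your version the $-1$ is pure slack.
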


\begin{proof}
In any interval of $L_i$ consecutive integers there are at least
\[
 \max\{\lfloor L_i/m\rfloor-1,0\}
\]
disjoint complete blocks of $m$ consecutive integers whose initial points
are multiples of $m$.  Every
Cartesian product of three such blocks contains each residue triple modulo
$m$ exactly once, proving \eqref{eq:blocklower}.  If $L_i/m\ge4$, then
$\lfloor L_i/m\rfloor-1\ge L_i/(2m)$, which gives
\eqref{eq:blocklower2}.
\end{proof}

\begin{proposition}[Quantitative parameter sieve]\label{prop:parametersieve}
There are absolute constants $P>127$, $Z_0\ge100$, $q_0\ge1$, and
$c_2>0$ with the following property.  Suppose that $q$ is squarefree,
\[
 q\equiv1\pmod W,\qquad H(q)\le M_0,\qquad q\ge q_0,
\]
and $Z\ge Z_0$.  Then the box \eqref{eq:ABCbox} contains at least
\begin{equation}
 c_2q^3Z^4\label{eq:planecount}
\end{equation}
triples $(A,B,C)\in(W\Z)^3$ satisfying \eqref{eq:qconditions} for every
$p\mid q$ and avoiding \eqref{eq:bad1}--\eqref{eq:bad3} for every
$p\nmid Wq$.
\end{proposition}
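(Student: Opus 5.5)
Write $A=WA'$, $B=WB'$, $C=WC'$, so that $(A',B',C')$ ranges over a box $\mathcal B$ of side lengths $L_1\asymp qZ^2/W$ and $L_2,L_3\asymp qZ/W$. Since $q\equiv1\pmod W$, $W$ is invertible modulo every prime $p\nmid W$, so every local condition on $(A,B,C)$ modulo $p$ becomes an equivalent condition on $(A',B',C')$ modulo $p$. We split the primes into three ranges: $p\mid q$, primes $p\nmid Wq$ with $p\le P$, and primes $p>P$ with $p\nmid Wq$. The plan is to handle the first two ranges exactly by a residue count via Lemma~\ref{lem:boxcount}, and the third by a union bound using the codimension-two nature of \eqref{eq:bad1}--\eqref{eq:bad3}.

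\emph{Step 1: exact residue count for small and denominator primes.} Let $m=q\prod_{p\le P,\,p\nmid Wq}p$, and let $\Omega\subseteq(\Z/m\Z)^3$ be the set of residue triples satisfying \eqref{eq:qconditions} at each $p\mid q$ and avoiding the three bad systems at each small $p\nmid Wq$. By the Chinese remainder theorem, $|\Omega|=\prod_p|\Omega_p|$.
\begin{itemize}
\item For $p\mid q$, the condition is that six linear forms in $(A,B,C)$ are nonzero modulo $p$. Each of $ABC$, $462C+15A$, $31A-462B$ and $15B+31C$ is a nonzero form modulo $p$ (because $p\nmid W$), so each vanishes on $p^2$ points. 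Hence $|\Omega_p|\ge p^3-6p^2=p^3(1-6/p)$. To keep this positive we need every prime factor of $q$ to exceed $6$; this holds since $(q,W)=1$ and $7\mid W$. Thus the bound is valid for all $p\ge 11$.
\item For $p\nmid Wq$, each bad system consists of two independent affine equations. For \eqref{eq:bad3} the linear parts $(31,-462,0)$ and $(15,0,462)$ are independent modulo $p$ because $p\nmid 462$. So each system has exactly $p$ solutions, and $|\Omega_p|\ge p^3-3p=p^3(1-3p^{-2})$.
\end{itemize}
Consequently
\[
 \frac{|\Omega|}{m^3}\ \ge\ \prod_{p\mid q}\Bigl(1-\tfrac6p\Bigr)\prod_{p\le P}\bigl(1-3p^{-2}\bigr).
\]
The first product is at least $\exp(-c\,H(q))\ge\exp(-cM_0)$, using $\log(1-6/p)\ge -C/p$ for $p\ge11$. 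The second product is bounded below by an absolute positive constant.

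We cannot apply Lemma~\ref{lem:boxcount} directly with modulus $m$, because $L_i\asymp qZ/W$ is not $\ge 4m$ when $m\ge q$. We therefore separate the moduli. Write $m=qm_P$ with $m_P$ depending only on $P$. Apply Lemma~\ref{lem:boxcount} first with modulus $m_P$, partitioning $\mathcal B$ into sub-boxes of side $\asymp qZ/(W m_P)$. Inside each sub-box, count the $q$-conditions along the $B$ and $C$ coordinates. The side length $\asymp qZ/(Wm_P)$ is at least $4q$ once $Z\ge Z_0$, with $Z_0$ chosen large depending on $P$ and $W$. We also use that $\mathcal B$ factors as a product of intervals, so we may apply Lemma~\ref{lem:boxcount} with modulus $m$ provided every $L_i\ge 4m$. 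This holds once $Z$ is large enough that $qZ/W\ge 4qm_P$. This is the reason $Z_0$ is chosen after $P$: it lets us apply \eqref{eq:blocklower2} directly with $m=qm_P$. The result is at least $c'(P)\,q^3Z^4$ triples satisfying all conditions at primes $p\mid q$ and small $p$.

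\emph{Step 2: union bound over large primes.} For $p>P$ with $p\nmid Wq$, each bad system is a codimension-two condition. We count the points of $\mathcal B$ satisfying it.
\begin{itemize}
\item In \eqref{eq:bad1} and \eqref{eq:bad2}, $A'$ is fixed modulo $p$ and one of $B'$, $C'$ is fixed modulo $p$. The count is at most $(L_1/p+1)(L_2/p+1)L_3$.
\item In \eqref{eq:bad3}, both $B$ and $C$ are determined modulo $p$ by $A$, so the count is at most $L_1(L_2/p+1)(L_3/p+1)$.
\end{itemize}
Note that $A,B,C$ are nonzero, with $|A|,|B|,|C|\ll qZ^2$. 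We must check whether a bad system can hold for $p$ larger than the box size.
\begin{itemize}
\item \eqref{eq:bad1} requires $p\mid A$ and $p\mid 462B+q$, where $0<462B+q\ll qZ$. Hence $p\ll qZ$, or the system is impossible.
\item For \eqref{eq:bad3}, the quantities $31A-462B-q$ and $15A+462C+q$ are nonzero integers of size $\ll qZ^2$; this is the point where the box constraints guarantee nonvanishing. So only primes $p\ll qZ^2$ contribute.
\end{itemize}
Summing over $P<p\ll qZ^2$, the main terms contribute $\ll L_1L_2L_3\sum_{p>P}p^{-2}\ll q^3Z^4/P$. The cross terms contribute $L_1L_3\sum_{p\ll qZ}1/p+\cdots$, which is of order $q^2Z^3\log(qZ)$ and appears too large relative to $q^3Z^4$ only if $q\ll Z^{-1}\log(qZ)$; this is excluded once $q\ge q_0$ and $Z\ge Z_0$. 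The boundary term $L_1$ times the number of primes up to $qZ^2$ also needs checking. The most delicate sum is $\sum_{p\ll qZ^2}L_1\asymp qZ^2\cdot qZ^2/\log$, which must be compared with $q^3Z^4$; it is $O(q^3Z^4/\log(qZ))$, which is acceptable.

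\emph{Step 3: choosing the constants.} Take $P$ large enough that the large-prime loss, at most $c\,q^3Z^4/P$, is below half of the Step 1 count. This is the main obstacle. The Step 1 constant depends on $P$ only through $\prod_{p\le P}(1-3p^{-2})\ge\prod_p(1-3p^{-2})$ (with the $p=2,3$ factors omitted as they divide $W$) and through the factor $2^{-3}$ from Lemma~\ref{lem:boxcount}, which is independent of $P$. So $c'$ is bounded below uniformly in $P$ and no circularity arises. The remaining error terms are then absorbed by taking $Z_0$ and $q_0$ large. Subtracting the Step 2 bound from the Step 1 count yields \eqref{eq:planecount}.
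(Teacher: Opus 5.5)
Your proposal is correct and follows essentially the same route as the paper. It uses an exact complete-block count via Lemma~\ref{lem:boxcount} with modulus $q$ times the product of the small primes $p\le P$, $p\nmid Wq$, whose surviving density $e^{-O(M_0)}\prod_{p\nmid W}(1-3/p^2)$ is independent of $P$. It then applies a union bound over large primes to the codimension-two bad systems, which can only occur for $p\ll qZ^2$, and fixes the constants in the order $P$, then $Z_0$, then $q_0$.

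The only blemish is your initial claim that Lemma~\ref{lem:boxcount} cannot be applied directly with $m=qm_P$, together with the unfinished ``separate the moduli'' detour. As you then observe yourself, $L_i\ge 4qm_P$ holds once $Z_0\gg W\prod_{p\le P,\,p\nmid W}p$, and this direct application is exactly how the paper proceeds.
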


\begin{proof}
Write $A=Wa$, $B=Wb$, and $C=Wc$.  The three intervals for $(a,b,c)$ have
cardinalities
\begin{equation}
 L_A\asymp_W qZ^2,
 \qquad L_B\asymp_W qZ,
 \qquad L_C\asymp_W qZ,
 \label{eq:scaledlengths}
\end{equation}
with absolute endpoint errors at most $2$.  Since $(W,q)=1$, all congruence
conditions may be transferred between $(A,B,C)$ and $(a,b,c)$ without
changing their ranks or densities.

For a prime $p\mid q$, the forbidden set in \eqref{eq:qconditions} is a
union of six affine hyperplanes in $\F_p^3$.  Hence at least
$p^3-6p^2$ residue triples survive.  Every such $p$ is at least $13$.  Since $6/p<1/2$ and
$\log(1-x)\ge-2x$ for $0\le x\le1/2$, we have
\begin{equation}
 \prod_{p\mid q}(1-6/p)
 \ge \exp\left(-12\sum_{p\mid q}\frac1p\right)
 \ge e^{-12M_0}.
 \label{eq:q-density}
\end{equation}

We next impose the bad-system exclusions at a fixed set of small primes.
For the moment let $P>127$ be fixed, and define
\[
 M_P=\prod_{\substack{p\le P\\p\nmid Wq}}p,
 \qquad
 M_*=\prod_{\substack{p\le P\\p\nmid W}}p.
\]
Then $M_P\mid M_*$ and $(q,M_P)=1$.  Each system
\eqref{eq:bad1}--\eqref{eq:bad3} has rank two over $\F_p$ whenever
$p\nmid Wq$.  For \eqref{eq:bad1} and \eqref{eq:bad2}, the two coefficient
rows are respectively
\[
 (1,0,0),(0,462,0)
 \quad\hbox{and}\quad
 (1,0,0),(0,0,462),
\]
which are independent because $p\nmid462$.  For \eqref{eq:bad3}, the rows
\[
 (31,-462,0),\qquad(15,0,462)
\]
are independent because $p\nmid31\cdot462$.  Thus each bad system contains
exactly $p$ of the $p^3$ residue triples, and their union contains at most
$3p$.  By the Chinese remainder theorem, the set $\Omega_P$ of residue
triples modulo $qM_P$ surviving both the denominator-prime conditions and
all small-prime bad-system exclusions has density
\begin{align}
 \frac{|\Omega_P|}{(qM_P)^3}
 &\ge \prod_{p\mid q}(1-6/p)
 \prod_{\substack{p\le P\\p\nmid Wq}}(1-3/p^2)\notag\\
 &\ge e^{-12M_0}
 \prod_{p\nmid W}(1-3/p^2)
 =:\vartheta>0.\label{eq:small-density}
\end{align}
The infinite product is positive, since all primes outside $W$ exceed $11$
and $\sum_p p^{-2}<\infty$.

Choose
\begin{equation}
 Z_0\ge\max\{100,10WM_*\}.
 \label{eq:Z0choice}
\end{equation}
For $Z\ge Z_0$, the endpoint errors in \eqref{eq:scaledlengths}
give, for example,
\[
 L_B,L_C\ge qZ/W-1\ge4qM_P,
 \qquad L_A\ge qZ^2/W-1\ge4qM_P.
\]
Thus all three sides contain at least four complete blocks modulo $qM_P$,
uniformly in $q$.  Lemma~\ref{lem:boxcount}, applied with $m=qM_P$, now gives
at least
\begin{equation}
 \frac{\vartheta}{8}L_AL_BL_C
 \ge \vartheta_0q^3Z^4
 \label{eq:smallsurvivors}
\end{equation}
triples surviving every condition at primes $p\le P$ and at primes
$p\mid q$, where $\vartheta_0>0$ depends only on the fixed $W$ and $M_0$.
This is a complete-block lower bound; no unquantified CRT boundary term
remains.

It remains to discard triples for which one of the bad systems occurs at a
prime $p>P$.  For this upper bound we discard the restrictions
$A,B,C\in W\Z$ and count all integer triples in the surrounding intervals;
this can only increase the number of bad triples.  The resulting side lengths
satisfy
\[
 L_A'\asymp qZ^2,
 \qquad L_B',L_C'\asymp qZ.
\]
For \eqref{eq:bad1}, the number of triples associated with a fixed prime is
at most
\begin{align*}
 (L_A'/p+1)(L_B'/p+1)L_C'
 &\le \frac{L_A'L_B'L_C'}{p^2}
 +\frac{L_A'L_C'+L_B'L_C'}p+L_C'\\
 &\ll \frac{q^3Z^4}{p^2}
      +\frac{q^2Z^3}{p}+qZ^2.
\end{align*}
For \eqref{eq:bad2} one analogously obtains
\begin{align*}
 (L_A'/p+1)L_B'(L_C'/p+1)
 &\le \frac{L_A'L_B'L_C'}{p^2}
 +\frac{L_A'L_B'+L_B'L_C'}p+L_B'\\
 &\ll \frac{q^3Z^4}{p^2}
      +\frac{q^2Z^3}{p}+qZ^2.
\end{align*}
For \eqref{eq:bad3}, choose $A$ first.  The two equations then prescribe
one residue class of $B$ and one residue class of $C$, so the count is at
most
\begin{align*}
 L_A'(L_B'/p+1)(L_C'/p+1)
 &\le \frac{L_A'L_B'L_C'}{p^2}
 +\frac{L_A'(L_B'+L_C')}p+L_A'\\
 &\ll \frac{q^3Z^4}{p^2}
      +\frac{q^2Z^3}{p}+qZ^2.
\end{align*}

Only primes $p\ll qZ^2$ can occur.  Indeed, each bad pair contains a nonzero
integer of size $O(qZ^2)$ that must be divisible by $p$.  For
\eqref{eq:bad1} and \eqref{eq:bad2} this integer is $A>0$.  For
\eqref{eq:bad3} it is the first expression, which is strictly positive for
$Z\ge100$:
\[
 31A-462B-q\ge q(31Z^2-924Z-1)>0.
\]
Summing over all three systems, using
\[
 \sum_{p>P}p^{-2}\ll P^{-1},\qquad
 \sum_{p\le X}p^{-1}\ll\log X,
 \qquad \pi(X)\le X,
\]
and noting that $\log(qZ^2)\le2\log(qZ)$ for $q,Z\ge2$, shows that, if
$E_{\rm tail}$ denotes the number discarded at primes $p>P$,
then
\begin{equation}
 E_{\rm tail}\ll
 \frac{q^3Z^4}{P}+q^2Z^3\log(qZ)+q^2Z^4.
 \label{eq:tailall}
\end{equation}
After division by $q^3Z^4$, \eqref{eq:tailall} becomes
\begin{equation}
 \frac{E_{\rm tail}}{q^3Z^4}
 \le C\left(\frac1P+\frac{\log(qZ)}{qZ}+\frac1q\right)
 \label{eq:tailnormalized}
\end{equation}
for an absolute constant $C$.  We now fix the constants in the announced
order.  First choose $P>127$ so large that the first term in
\eqref{eq:tailnormalized} is at most $\vartheta_0/4$.  Next choose $Z_0$ as in
\eqref{eq:Z0choice}, using this value of $P$.  Finally choose $q_0$ so large
that $q_0Z_0>\mathrm e$ and, for all $q\ge q_0$ and $Z\ge Z_0$,
\[
 C\left(\frac{\log(qZ)}{qZ}+\frac1q\right)
 \le\frac{\vartheta_0}{4}.
\]
This is uniform because $u\mapsto(\log u)/u$ is decreasing for $u>\mathrm e$.
Subtracting the large-prime tail from \eqref{eq:smallsurvivors} leaves at
least $(\vartheta_0/2)q^3Z^4$ triples.  This proves the proposition with
$c_2=\vartheta_0/2$.
\end{proof}

\begin{definition}\label{def:retained}
A \emph{retained denominator} at scale $Q$ is an integer $q\in[Q,2Q]$
counted by Lemma~\ref{lem:denominators} and satisfying $q\ge q_0$.  A
\emph{retained parameter triple} over such a denominator is a triple
$(A,B,C)$ counted by Proposition~\ref{prop:parametersieve}.  A
\emph{retained plane} is the ruling plane associated with a retained
denominator and a retained parameter triple.
\end{definition}

\begin{lemma}[Distinct retained planes]\label{lem:distinctplanes}
Distinct retained quadruples determine distinct affine ruling planes.
\end{lemma}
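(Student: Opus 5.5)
The plan is to show that the map (retained denominator, retained triple) $\mapsto$ affine ruling plane in $\R^6$ is injective. I will do this by recovering $\theta=(\alpha,\beta,\gamma)$ intrinsically from the plane, and then recovering $(q,A,B,C)$ from $\theta$.

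\emph{Step 1: intrinsic coordinates on the plane.} From \eqref{eq:x1}, \eqref{eq:x3} and \eqref{eq:x5}, every point of the plane attached to $\theta$ satisfies
\[
 x_1-x_5=r/15,\qquad x_3-x_5=s/31 .
\]
These identities do not involve $\theta$. So the functions $r=15(x_1-x_5)$ and $s=31(x_3-x_5)$ are coordinates defined on all of $\R^6$, and the parametrization $(r,s)\mapsto(x_1,\dots,x_6)$ is a bijection from $\R^2$ onto the plane. Equivalently, each ruling plane is the graph over $(r,s)$ of the affine maps \eqref{eq:x1}--\eqref{eq:x6}. Hence the plane, viewed as a set, determines each $x_i$ as a unique affine function of $(r,s)$, together with its coefficients.

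\emph{Step 2: recover $\theta$.} By \eqref{eq:x5}, the $r$- and $s$-coefficients of $x_5$ on the plane are $-\beta$ and $-\gamma$, so the plane determines $\beta$ and $\gamma$. By \eqref{eq:x2}, the $s$-coefficient of $x_2$ is $\alpha-462\beta\gamma-\gamma$, which then determines $\alpha$. Thus two planes that coincide have the same $\theta$.

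\emph{Step 3: recover the quadruple from $\theta$.} Suppose two retained quadruples $(q,A,B,C)$ and $(q',A',B',C')$ give the same $\theta$. For each prime $p\mid q$, condition \eqref{eq:qconditions} gives $p\nmid A$, so $(A,q)=1$. Hence $\alpha=A/q$ is in lowest terms with $q>0$ and $A>0$, and similarly for $A'/q'$. Uniqueness of reduced fractions gives $q=q'$ and $A=A'$. Then $B=q\beta=B'$ and $C=q\gamma=C'$. This argument uses only the coprimality $(A,q)=1$, not the scale $[Q,2Q]$.

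I expect no serious obstacle here. The only point needing care is Step 1: the plane must be treated as a subset of $\R^6$, so that the affine coefficients are intrinsic rather than artifacts of the parametrization. This holds because $r$ and $s$ are the fixed linear functionals $15(x_1-x_5)$ and $31(x_3-x_5)$, independent of $\theta$.
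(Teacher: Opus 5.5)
Your proof is correct. The arithmetic half is the same as the paper's: the condition at $p\mid q$ gives $(A,q)=1$, so $\alpha=A/q$ is a reduced fraction with positive numerator and denominator. This fixes $q$ and $A$, and then $B=q\beta$, $C=q\gamma$. You are right that $(A,q)=1$ alone suffices; the paper cites all three coprimalities, which is redundant.

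The geometric half, showing that the plane determines $\theta$, is argued differently.
\begin{itemize}
\item The paper argues coordinate-free. Distinct $\theta$ give distinct skew matrices, hence distinct graph subspaces $\{(U,A_\theta U)\}$. The affine slice $\{\zeta-y_5=1\}$ recovers the three-dimensional graph subspace: its direction space is the kernel of $\zeta-y_5$ inside that subspace, and adjoining one point of the slice, where $\zeta-y_5=1\neq0$, spans the rest.
\item You instead note that $r=15(x_1-x_5)$ and $s=31(x_3-x_5)$ are $\theta$-independent linear functionals. So every ruling plane is a graph over the $(r,s)$-plane, and its affine coefficients are intrinsic. You then read $\beta,\gamma$ off the coefficients of $x_5$ and $\alpha$ off the $s$-coefficient of $x_2$.
\end{itemize}

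Your route is more explicit and can be checked line by line against \eqref{eq:x1}--\eqref{eq:x5}. The paper's route never touches the formulas. It also sets up the graph-subspace language reused in the intersection accounting that follows, where two distinct planes are shown to meet in at most one point via $\ker(A_\theta-A_{\theta'})$.
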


\begin{proof}
The conditions at every prime divisor of $q$ give
$(A,q)=(B,q)=(C,q)=1$.  Thus $q$ is the reduced denominator of each of the
three fractions $A/q,B/q,C/q$.  Equality of two rational parameter triples
therefore forces equality of their denominators and then of all three
numerators.  Hence distinct retained quadruples give distinct skew matrices
and distinct homogeneous graph subspaces.

It remains to pass from graph subspaces to their affine slices.  The difference
space of such a slice is the two-dimensional kernel of $\zeta-y_5$ inside the
graph subspace.  Adjoining any point of the slice, which has $\zeta-y_5=1$,
recovers the whole three-dimensional graph subspace.  Thus equality of two
affine slices would imply equality of their graph subspaces and hence of their
skew matrices.  The retained planes are therefore distinct.
\end{proof}

\section{Local admissibility}\label{sec:local}

Fix a retained plane and a lattice-coordinate system from
Lemma~\ref{lem:coordinates}.  Let
$\Psi=(\psi_1,\dots,\psi_6):\Z^2\to\Z^6$ be the six integer affine forms
obtained from $x_1,\dots,x_6$.  For a prime $p$, define
\begin{equation}
 \Lambda_p(m)=\frac{p}{p-1}\1_{p\nmid m},
 \qquad
 \beta_p=p^{-2}\sum_{n\in\F_p^2}\prod_{i=1}^6\Lambda_p(\psi_i(n)).
 \label{eq:localfactor}
\end{equation}
We use Bienvenu's terminology: a system is \emph{admissible} if it has
finite complexity and $\beta_p>0$ for every prime $p$.  Finite complexity
is preserved by the invertible affine reparametrization of the lattice, so
Lemma~\ref{lem:complexity} applies to $\Psi$.

\subsection{Primes away from \texorpdfstring{$Wq$}{Wq}}

If $p\nmid Wq$, then $p\nmid\Delta=q^2\delta$.  Applying
Lemma~\ref{lem:sublatticereduction} to $\Gamma\subseteq\Z^2$ shows that the
reduction of the projected lattice is all of $\F_p^2$.  The lattice-coordinate
map is therefore an affine isomorphism modulo $p$ onto the full projected
plane; multiplication by the units $15$ and $31$ then identifies it with
the full $(r,s)$-plane modulo $p$.

\begin{lemma}[Zero-form criteria]\label{lem:zeroforms}
For $p\nmid Wq$, the forms $x_1,x_3,x_5$ are never zero affine forms modulo
$p$.  The remaining forms are zero affine forms precisely in the following
cases:
\begin{align*}
 x_2\equiv0&\iff A\equiv0,\quad462B+q\equiv0\pmod p,\\
 x_4\equiv0&\iff A\equiv0,\quad462C+q\equiv0\pmod p,\\
 x_6\equiv0&\iff31A-462B-q\equiv0,\quad
 15A+462C+q\equiv0\pmod p.
\end{align*}
\end{lemma}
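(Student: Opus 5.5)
The plan is to reduce everything to the explicit affine expressions \eqref{eq:texpanded}--\eqref{eq:y4expanded} for the coordinates as functions of $(y_1,y_3)$. Since $p\nmid Wq$, the discussion preceding the lemma (via Lemma~\ref{lem:sublatticereduction}) shows that the lattice-coordinate map is an affine isomorphism modulo $p$ onto the full $(y_1,y_3)$-plane over $\F_p$. Hence $\psi_i$ is the zero affine form modulo $p$ exactly when the affine function $x_i(y_1,y_3)$ vanishes identically on $\F_p^2$. That function has coefficients in $\Z_{(p)}$, because $q$ is invertible modulo $p$. So the claim becomes a statement about the three coefficients of $x_i$: its constant term, its $y_1$-coefficient and its $y_3$-coefficient, read modulo $p$ with $\alpha=A/q$, $\beta=B/q$, $\gamma=C/q$. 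Throughout, $p\nmid W$ makes $2,3,5,7,11,31,127$ invertible, so the factors $15,31,462,508$ can be cancelled freely.

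\textbf{The forms $x_1,x_3,x_5$.} By \eqref{eq:texpanded}, together with $x_1=y_1+t$ and $x_3=y_3+t$, each of these forms has constant term $-1$. None of them can vanish identically modulo $p$.

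\textbf{The form $x_2=y_2+t$.} Its coefficients are:
\begin{itemize}
\item constant term $-(462\beta+1)$;
\item $y_1$-coefficient $-15\beta(462\beta+1)$;
\item $y_3$-coefficient $31\bigl(\alpha-\gamma(462\beta+1)\bigr)$.
\end{itemize}
Multiplying by the unit $q$, the constant term vanishes iff $462B+q\equiv0$. Given that, the $y_1$-coefficient vanishes automatically, and the $y_3$-coefficient vanishes iff $A\equiv0$. This gives the stated criterion. The form $x_4=y_4+t$ is handled symmetrically:
\begin{itemize}
\item constant term $-(462\gamma+1)$;
\item $y_1$-coefficient $-15\bigl(\alpha+\beta(462\gamma+1)\bigr)$;
\item $y_3$-coefficient $-31\gamma(462\gamma+1)$.
\end{itemize}
Vanishing of the constant forces $462C+q\equiv0$, and then vanishing of the $y_1$-coefficient is exactly $A\equiv0$.

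\textbf{The form $x_6$.} Since $p\nmid508$, it suffices to study $\nu_6=508x_6$. Put
\[
 E_1=31\alpha-462\beta-1,\qquad E_2=15\alpha+462\gamma+1 .
\]
These are the factors appearing in the determinant table of Lemma~\ref{lem:complexity}. A direct computation of the constant term of $\nu_6$ gives
\[
 -46-6930\beta-14322\gamma=15E_1-31E_2 .
\]
Next I will expand the $y_1$- and $y_3$-coefficients of $\nu_6$ from \eqref{eq:texpanded}--\eqref{eq:y4expanded}. I expect each to be a combination of $E_1$ and $E_2$ whose coefficient matrix, together with the constant row, has rank two modulo $p$. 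Given the table entries $(31\alpha-462\beta-1)(15\beta+31\gamma-1)$ and so on, I anticipate something like $y_1$-coefficient $\propto E_2$ and $y_3$-coefficient $\propto E_1$, up to unit factors built from $15$ and $31$. From this:
\begin{itemize}
\item if $E_1\equiv E_2\equiv0$, all three coefficients vanish;
\item conversely, vanishing of the two linear coefficients forces $E_1\equiv E_2\equiv0$, since the relevant $2\times2$ determinant is a product of the primes $3,5,31$, which are units.
\end{itemize}
Multiplying $E_1,E_2$ by $q$ gives exactly the stated system $31A-462B-q\equiv0$, $15A+462C+q\equiv0$.

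The main obstacle is this last bookkeeping step: verifying that the two linear coefficients of $\nu_6$ really lie in the span of $E_1,E_2$ with a unit determinant. If instead the linear coefficients carry extra factors (for example $\beta$ or $\gamma$, which need not be units modulo $p$), I would fall back on a different argument. Namely, I would combine the vanishing of the constant term, $15E_1\equiv31E_2$, with the vanishing of a single linear coefficient to isolate $E_1$ and $E_2$ separately.
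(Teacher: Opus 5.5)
Your route is the paper's: you read off the coefficients of each form in $(y_1,y_3)$-coordinates, which is equivalent to the paper's $(r,s)=(15y_1,31y_3)$ since $15$ and $31$ are units mod $p$. Your treatment of $x_1,\dots,x_5$ is complete and correct. So is your constant-term identity $-E=15E_1-31E_2$, where $E=6930\beta+14322\gamma+46$.

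The gap is in the $x_6$ case, which is the only nontrivial part. You never compute the linear coefficients of $\nu_6$, and the shape you anticipate is wrong. Expanding the formulas for $t,y_2,y_4$, the $y_1$- and $y_3$-coefficients of $\nu_6$ are
\[
 -15(E_1+\beta E)=-15(1-15\beta)E_1-465\beta E_2
 \quad\text{and}\quad
 31(E_2-\gamma E)=465\gamma E_1+31(1-31\gamma)E_2 .
\]
Neither is a unit multiple of a single $E_i$. The $2\times2$ matrix expressing these two coefficients in terms of $(E_1,E_2)$ has determinant $-465(1-15\beta-31\gamma)$, which is not a unit in general. So your claim that vanishing of the two linear coefficients forces $E_1\equiv E_2\equiv0$ is false. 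Take $15\beta+31\gamma\equiv1$, $\alpha\equiv\gamma-\beta$, $\beta\not\equiv0$ (i.e.\ $15B+31C\equiv q$, $A\equiv C-B$, $B\not\equiv0$). Then both linear coefficients vanish while $E_1\equiv-508\beta\not\equiv0$, and $x_6$ is the nonzero constant $-1$.

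Your fallback (constant term plus one linear coefficient) is the correct repair, and it is exactly the paper's argument. But it rests on the identities displayed above, which you have not established. These are the paper's $508\,\partial_rx_6+\beta E=-(31\alpha-462\beta-1)$ and $508\,\partial_sx_6+\gamma E=15\alpha+462\gamma+1$. Without them you do not know that, once the constant term vanishes, a linear coefficient becomes a unit multiple of $E_1$ or $E_2$.

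With the identities, both directions are immediate. If all three coefficients vanish, then $E\equiv0$, so the $y_1$-coefficient is $-15E_1$. Hence $E_1\equiv0$, and then $31E_2\equiv15E_1\equiv0$. Equivalently, the constant row and the $y_1$-row have determinant $-465$, which is where your ``product of $3,5,31$'' intuition actually lands. Conversely, $E_1\equiv E_2\equiv0$ gives $E=31E_2-15E_1\equiv0$, and the identities then make both linear coefficients vanish. This direction also needs the identities, and your fallback does not address it.
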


\begin{proof}
The constants of $x_1,x_3,x_5$ are $-1$.  For $x_2$, its constant
vanishes exactly when $462\beta+1=0$; under that condition its
$r$-coefficient vanishes automatically and its $s$-coefficient equals
$\alpha$.  This proves the first criterion.  The argument for $x_4$ is
identical.

For $x_6$, put
\[
 E=462(15\beta+31\gamma)+46.
\]
Direct expansion of \eqref{eq:x6} gives
\[
 508x_6(0,0)=-E,
\]
\[
 508\,\partial_rx_6+\beta E=-(31\alpha-462\beta-1),
\]
\[
 508\,\partial_sx_6+\gamma E=15\alpha+462\gamma+1.
\]
Thus all three affine coefficients vanish exactly under the two congruences
in the statement; conversely, those two congruences imply $E=0$.
\end{proof}

The parameter sieve excludes all three zero-form possibilities.

\subsection{Primes dividing the denominator}

Let $p\mid q$.  Because $q$ is squarefree and $q\equiv1\pmod W$, the
constants $15,31,462,508$ are units modulo $p$.  Reducing
\eqref{eq:Haff}--\eqref{eq:phi2} gives
\begin{equation}
 U=\lambda(C,-B,A).\label{eq:Uspecial}
\end{equation}
The exact graph equations also give
\begin{equation}
 Cy_2-By_4+A(t+1)=0.\label{eq:Vspecial}
\end{equation}
Taking $w=y_2$, the restrictions of the first five coordinates to this
special plane are
\begin{align*}
 x_1&=\lambda\left(\frac C{15}+\frac A{462}\right),\\
 x_2&=w+\frac{A\lambda}{462},\\
 x_3&=\lambda\left(-\frac B{31}+\frac A{462}\right),\\
 x_4&=\frac{Cw+A(1+A\lambda/462)}B+\frac{A\lambda}{462},\\
 x_5&=\frac{A\lambda}{462},
\end{align*}
and the coefficient of $w$ in $x_6$ is
\begin{equation}
 \frac{15B+31C}{508B}.\label{eq:x6w}
\end{equation}
The denominator-prime conditions certify nonvanishing coordinate by
coordinate:
\begin{center}
\begin{tabular}{c|c}
\toprule
coordinate&nonzero affine coefficient modulo $p$\\
\midrule
$x_1$&$(462C+15A)/(15\cdot462)$ in the $\lambda$ direction\\
$x_2$&$1$ in the $w$ direction\\
$x_3$&$(31A-462B)/(31\cdot462)$ in the $\lambda$ direction\\
$x_4$&$C/B$ in the $w$ direction\\
$x_5$&$A/462$ in the $\lambda$ direction\\
$x_6$&$(15B+31C)/(508B)$ in the $w$ direction\\
\bottomrule
\end{tabular}
\end{center}
Every denominator in this table is a unit modulo $p$, and every displayed
numerator is nonzero by \eqref{eq:qconditions}.  Thus no coordinate is the
zero affine form on the special plane.

\begin{lemma}[Reduction of a prime-to-$p$ sublattice]\label{lem:sublatticereduction}
Let $L'\subseteq L$ be rank-$r$ lattices of finite index $\iota=[L:L']$.
If $p\nmid\iota$, then inclusion induces an isomorphism
\[
 L'/pL'\longrightarrow L/pL.
\]
More generally, if a nonempty affine coset $z'+L'$ is contained in a
nonempty affine coset $z+L$, then the two affine cosets have the same
reduction modulo $p$.
\end{lemma}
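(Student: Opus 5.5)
The plan is to prove the linear statement first and then derive the affine one from it. For the linear part we work in $L$ and use the chain of inclusions $pL\subseteq pL'+pL$. The quotient $L/L'$ is a finite abelian group of order $\iota$, and $p\nmid\iota$. Multiplication by $p$ is therefore an automorphism of $L/L'$, which gives $L=L'+pL$. Hence the induced map $L'/pL'\to L/pL$ is surjective.

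Both sides are $\F_p$-vector spaces of dimension $r$, because each lattice is free of rank $r$. A surjection between vector spaces of equal finite dimension is an isomorphism, so injectivity follows from the counting. One can also check injectivity directly. If $x\in L'\cap pL$, write $x=py$ with $y\in L$. Then $p\bar y=0$ in $L/L'$, and since $p$ acts invertibly this forces $y\in L'$, so $x\in pL'$.

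For the affine part, reduction modulo $p$ means taking the image of a coset in $L/pL$, or equivalently in the ambient $\Z^r/p\Z^r$ as the paper uses it. We pick $z'\in z'+L'\subseteq z+L$, so that $z+L=z'+L$. The reduction of $z'+L$ is $\bar z'+(\text{image of }L)$, and the reduction of $z'+L'$ is $\bar z'+(\text{image of }L')$. By the identity $L=L'+pL$, these two images coincide inside $L/pL$. They then coincide inside any further quotient, such as their common images in $\Z^2/p\Z^2$ in the application with $L=\Z^2$. So the two affine cosets have the same reduction.

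There is no real obstacle in this argument. The one point that needs care is to state the affine claim relative to a fixed ambient reduction, so that the same quotient map is applied to both cosets. We take this ambient reduction to be $L\to L/pL$, or $\Z^2\to\F_p^2$ in the application where $L=\Z^2$ and $L'=\Gamma$.
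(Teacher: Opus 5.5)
Your proof is correct and follows essentially the paper's route. The paper gets $L=L'+pL$ and the injectivity from explicit B\'ezout coefficients $a\iota=1+bp$, while you get both from the invertibility of multiplication by $p$ on the finite group $L/L'$ (the same fact in group-theoretic form); the affine part, via a common base point, is identical.
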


\begin{proof}
Since $\iota L\subseteq L'$, choose $a,b\in\Z$ with $a\iota=1+bp$.  For any
$x\in L$, the element $a\iota x\in L'$ is congruent to $x$ modulo $pL$, proving
surjectivity.  If $x\in L'$ and $x=py$ with $y\in L$, then
\[
 y=a\iota y-bpy=a\iota y-bx\in L',
\]
so $x\in pL'$, proving injectivity.  For the affine statement, choose a common point $z_c\in z'+L'$.  Since
$z'+L'\subseteq z+L$, one has $z'+L'=z_c+L'$ and $z+L=z_c+L$.
Translating the lattice isomorphism by the reduction of $z_c$ proves equality
of the two affine reductions.
\end{proof}

\begin{lemma}[Full special fibre]\label{lem:specialfibre}
Let
\[
 \mathcal A_0=z_0+L_0
\]
be the affine lattice of integer triples $(y_1,y_3,t)$ satisfying
\eqref{eq:Haff}--\eqref{eq:phi2}, before the final $x_6$-condition, and let
\[
 \mathcal A=z_*+L\subseteq\mathcal A_0
\]
be the full affine lattice after that condition is imposed.  Then
$L\subseteq L_0$ and
\[
 [L_0:L]=\delta\qquad\text{for some }\delta\mid508.
\]
For every prime $p\mid q$, the reduction modulo $p$ of $\mathcal A$, after
the exact lift of Lemma~\ref{lem:integrallift}, is the entire affine plane
described by \eqref{eq:Uspecial}--\eqref{eq:Vspecial}.
\end{lemma}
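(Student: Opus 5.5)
The index claim $[L_0:L]=\delta\mid508$ follows from the description of the $x_6$-condition, and the special-fibre claim then follows from Lemma~\ref{lem:sublatticereduction}.

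\textbf{Step 1: the index.} The final condition is $508\mid\nu_6$, where $\nu_6$ is the integer affine function on $\mathcal A_0$ from Lemma~\ref{lem:integrallift}. Let $\bar\nu$ be its linear part, viewed as a homomorphism $L_0\to\Z/508\Z$. Since $\mathcal A$ is nonempty by Lemma~\ref{lem:witness}, we have $\mathcal A=z_*+\ker\bar\nu$. Hence $L=\ker\bar\nu\subseteq L_0$, and
\[
 [L_0:L]=|\bar\nu(L_0)|,
\]
which is the order of a subgroup of $\Z/508\Z$ and so divides $508$. This is the same computation as in Lemma~\ref{lem:index}, now carried out on the three-dimensional lattice $L_0$ rather than on its projection.

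\textbf{Step 2: reduce the special-fibre claim to $\mathcal A_0$.} Fix $p\mid q$. Since $q\equiv1\pmod W$ and $508\mid W$, we have $p\nmid508$, so $p\nmid\delta$. Lemma~\ref{lem:sublatticereduction}, applied to $\mathcal A\subseteq\mathcal A_0$, then shows that $\mathcal A$ and $\mathcal A_0$ have the same reduction modulo $p$. It therefore suffices to show that $\mathcal A_0$ reduces onto the whole special plane.

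\textbf{Step 3: identify the reduction of $\mathcal A_0$.} The approach is to show the reduction is two-dimensional over $\F_p$ and contained in the two-dimensional special plane, so the two coincide.
\begin{itemize}
\item[(a)] \emph{Rank of $L_0$.} From the proof of Lemma~\ref{lem:index}, $L_0$ is the index-$q$ kernel of $\phi$ inside the primitive rank-two kernel $N$ of $b_1y_1+b_2y_3+qt$, with basis $v_1,v_2$. Since $\widetilde\phi(v_2)$ is a unit modulo $q$, $L_0$ has basis
\[
 v_2,\qquad q v_2\ \ \text{and}\ \ v_1-\kappa v_2,
\]
where $\kappa\equiv\widetilde\phi(v_1)\widetilde\phi(v_2)^{-1}\pmod q$. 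More precisely, a basis is $\{v_1-\kappa v_2,\ qv_2\}$. I then look at the map $(y_1,y_3,t)\mapsto(y_1,y_2,y_3,y_4,t)$ modulo $p$, with $y_2,y_4$ given by \eqref{eq:y2y4lift}, and need its image on $L_0$ to have rank two over $\F_p$.
\item[(b)] \emph{The vector $qv_2$.} Modulo $p$ we have $qv_2=(q^2,0,-qb_1)\equiv0$ in the $(y_1,y_3,t)$ coordinates. However, the lifted $y_2$ of $qv_2$ equals $\widetilde\phi(v_2)=-6930B^2$, which is nonzero modulo $p$. So this direction supplies the $w$ direction.
\item[(c)] \emph{The vector $v_1-\kappa v_2$.} Its reduction modulo $p$ is $(c,1,-k_0)$ modulo $p$, which is nonzero. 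This supplies the $\lambda$ direction.
\end{itemize}
The two images are independent: one has zero $(y_1,y_3,t)$-part and the other does not. So the reduced lattice has rank two.

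\textbf{Step 4: containment.} The reduction of $\mathcal A_0$ lies in the special plane because the relations \eqref{eq:Uspecial} and \eqref{eq:Vspecial} hold modulo $p$ on every integral point, having been derived from \eqref{eq:Haff}--\eqref{eq:phi2} and the exact graph equations. The special plane is two-dimensional, parametrized by $(\lambda,w)$, so the rank-two image must fill it. The affine translate is handled by the common base point.

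\textbf{Main obstacle.} The delicate point is Step 3: the $w$ direction comes only from the lifted coordinates, not from the projected triple. This is why the reduction must be taken after the exact lift rather than on the $(y_1,y_3,t)$ coordinates. I would verify explicitly that $y_2(qv_2)=\widetilde\phi(v_2)$, and that the lifted $y_4$ of $qv_2$ is consistent with \eqref{eq:Vspecial}.
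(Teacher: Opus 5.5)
Your proof is correct and is essentially the paper's argument. It uses the same basis $v_1-\kappa v_2,\ qv_2$ of $L_0$ (the paper's $w_1=v_1+zv_2$ with $z=-\kappa$), the same key observation that $qv_2$ vanishes in the $(y_1,y_3,t)$-coordinates modulo $p$ while its lifted $y_2$ equals $-6930B^2$, and the same transfer from $\mathcal A_0$ to $\mathcal A$ via Lemma~\ref{lem:sublatticereduction} with $p\nmid\delta$. You only reverse the order, and you replace the paper's triangular $(\lambda,w)$-matrix by a containment-plus-dimension count.

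Two small points. In Step~2, say explicitly that you use the abstract isomorphism $L/pL\cong L_0/pL_0$ together with linearity of the lift, which sends $pL_0$ into $p\,\Z[1/508]^6$; by your own Step~3(b), equality of reductions in the $(y_1,y_3,t)$-coordinates alone would not control the lifted coordinates. Also, the reduction of $v_1-\kappa v_2$ is $(c,1,-k_0+\kappa b_1)$ rather than $(c,1,-k_0)$, which is harmless because only the entry $y_3=1$ is used.
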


\begin{proof}
The index assertion is the last step of Lemma~\ref{lem:index}.  The
formulas in Lemma~\ref{lem:integrallift} define on $\mathcal A_0$ an
affine coordinate map
\[
 \iota_0:\mathcal A_0\longrightarrow\Z[1/508]^6;
\]
its restriction to $\mathcal A$ takes values in $\Z^6$.  Since $p\mid q$
and $(q,508)=1$, reduction of $\iota_0$ modulo $p$ is well defined.  We
first work with $\mathcal A_0$.  Use the basis $v_1,v_2$ of the primitive
homogeneous hyperplane kernel from Lemma~\ref{lem:index}, and retain the
integer-valued form $\widetilde\phi$.  Put
\[
 a_0=\widetilde\phi(v_1),\qquad
 u_0=\widetilde\phi(v_2)=-6930B^2.
\]
The integer $u_0$ is a unit modulo $q$.  Choose $z\in\Z$ with
$a_0+zu_0\equiv0\pmod q$, and set
\[
 w_1=v_1+zv_2,
 \qquad w_2=qv_2.
\]
Both vectors lie in $L_0$.  Relative to $(v_1,v_2)$ their coordinate matrix
is
\[
 \begin{pmatrix}1&0\\ z&q\end{pmatrix},
\]
whose determinant is $q$.  Since $L_0$ has exact index $q$ in the primitive
hyperplane kernel, $(w_1,w_2)$ is a basis of $L_0$.

Apply the linear part $D\iota_0$ of this affine coordinate map and use
$(\lambda,w)$ as coordinates on the direction space of the special plane,
where $U=\lambda(C,-B,A)$ and $w=y_2$.  Modulo $p$, the image of $w_1$
has $y_3=1$, so $-B\lambda=31$.  The vector $w_2$ has $U\equiv0$, while
its lifted change in $y_2$ is
\[
 \frac{\widetilde\phi(qv_2)}q
 =\widetilde\phi(v_2)=-6930B^2.
\]
Thus the two reduced direction vectors have the triangular coordinate
matrix
\begin{equation}
 D\iota_0(w_1)\longmapsto
 \left(-\frac{31}{B},\,*\right),
 \qquad
 D\iota_0(w_2)\longmapsto
 \left(0,-6930B^2\right).
 \label{eq:specialbasiscoords}
\end{equation}
Both diagonal entries are units modulo $p$: $B$ is a unit by
\eqref{eq:qconditions}, and every prime divisor of $6930$ divides $W$.
Hence the determinant of this coordinate matrix is nonzero modulo $p$, and
the reductions of the lifted vectors $w_1,w_2$ form a basis of the complete
two-dimensional direction space of the special plane.

Now $z_*\in\mathcal A\subseteq\mathcal A_0$.  Reducing the exact affine
relations at its lift gives
\[
 U(z_*)=\lambda_*(C,-B,A),
 \qquad
 C y_2(z_*)-B y_4(z_*)+A(t(z_*)+1)=0
\]
for some $\lambda_*\in\F_p$.  Hence the reduction of $z_*+L_0$ is the
entire special affine plane.  Since $z_*\in z_0+L_0$, the precondition
coset may indeed be written with the common base point as
$\mathcal A_0=z_*+L_0$.

Finally, $p\nmid\delta$ because $p\mid q$, $(q,508)=1$, and $\delta\mid508$.
Lemma~\ref{lem:sublatticereduction} gives an isomorphism
$L/pL\to L_0/pL_0$ and equality of the reductions of the common-base-point
cosets $z_*+L$ and $z_*+L_0$.  The direction map
$D\iota_0:L_0\to\Z[1/508]^6$ is a homomorphism, and $508$ is a unit
modulo $p$, so congruent lattice directions have congruent coordinate images.
Therefore $\mathcal A=z_*+L$ also reduces onto the entire affine special
plane.
\end{proof}

\begin{corollary}[Special fibre in lattice coordinates]\label{cor:lattice-special-fibre}
Let $p\mid q$, and use any lattice-coordinate parametrization from
Lemma~\ref{lem:coordinates}.  The reduction of the resulting affine map
\[
 \overline\Psi:\F_p^2\longrightarrow\F_p^6
\]
has image equal to the complete affine special plane described by
\eqref{eq:Uspecial}--\eqref{eq:Vspecial}.
\end{corollary}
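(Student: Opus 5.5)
The corollary follows by composing the bijective lattice parametrization of Lemma~\ref{lem:coordinates} with the conclusion of Lemma~\ref{lem:specialfibre}. Fix $p\mid q$ and a parametrization $n\mapsto z_0+n_1\widetilde g_1+n_2\widetilde g_2$ from Lemma~\ref{lem:coordinates}. Its image is the full affine lattice $\mathcal A$ of integral points, lifted to $\Z^6$ by Lemma~\ref{lem:integrallift}. Thus $\Psi(n)=(x_1,\dots,x_6)$ evaluated at $z_0+n_1\widetilde g_1+n_2\widetilde g_2$, and $\Psi$ is an integer affine map with integer coefficients, as shown at the end of that lemma.

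The first step is to check that the reduction $\overline\Psi$ is well defined on $\F_p^2$. Because $\Psi$ is affine with integer linear coefficients, changing $n$ by an element of $p\Z^2$ changes $\Psi(n)$ by an element of $p\Z^6$. Hence $\overline\Psi(n\bmod p)=\Psi(n)\bmod p$, and the image of $\overline\Psi$ is exactly the set of reductions modulo $p$ of the lifted points $\Psi(n)$ for $n\in\Z^2$.

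The second step identifies this image. Since the parametrization is a bijection $\Z^2\to\mathcal A$, this set of reductions is the reduction modulo $p$ of the lifted coset $\mathcal A=z_*+L$, where one may take $z_*=z_0$ as base point and $L$ is spanned by the lifts of $g_1,g_2$. Lemma~\ref{lem:specialfibre} states that this reduction, after the exact lift, is the entire affine special plane described by \eqref{eq:Uspecial}--\eqref{eq:Vspecial}.

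The only point needing care is that Lemma~\ref{lem:specialfibre} concerns the reduction of the set $\mathcal A$ and not a particular basis of $L$. Since the statement is about the image of a set, it is independent of the choice of basis and base point. Concretely, the linear part of $\overline\Psi$ sends the standard basis of $\F_p^2$ to the reductions of $D\iota_0(\widetilde g_1)$ and $D\iota_0(\widetilde g_2)$. Because $\widetilde g_1,\widetilde g_2$ form a $\Z$-basis of $L$, they span $L/pL$, and their images span the reduction of $D\iota_0(L)$. By the proof of Lemma~\ref{lem:specialfibre}, this reduction is the full two-dimensional direction space of the special plane. So $\overline\Psi$ is in fact an affine bijection of $\F_p^2$ onto that plane.
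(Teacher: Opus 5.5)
Your proposal is correct and follows essentially the same route as the paper: the parametrization of Lemma~\ref{lem:coordinates} is a bijection $\Z^2\to z_*+L$ built from a basis of $L$, so the image of $\overline\Psi$ is exactly the reduction of the lifted coset, and Lemma~\ref{lem:specialfibre} identifies that reduction with the whole special plane (it already absorbs the index $\delta\mid508$, which is prime to $p$). Your extra remark that $\overline\Psi$ is then an affine bijection onto the plane is also correct, since a surjection between two sets of $p^2$ points is a bijection.
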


\begin{proof}
The parametrization in Lemma~\ref{lem:coordinates} is obtained from a basis
of the full homogeneous direction lattice $L$ and a base point $z_*$ of the
full affine lattice $z_*+L$.  Consequently reduction of its two standard coordinate
vectors identifies $\F_p^2$ with $L/pL$, and its affine image is exactly the
reduction of $z_*+L$ followed by the exact coordinate lift.  The remaining
integrality index is $\delta\mid508$, which is prime to $p\mid q$; this is
already incorporated in the isomorphism
$L/pL\simeq L_0/pL_0$ in Lemma~\ref{lem:specialfibre}.  That lemma therefore
identifies the image of $\overline\Psi$ with the entire special affine
plane.
\end{proof}

\subsection{A uniform singular-series lower bound}

For $p\mid W$, Lemma~\ref{lem:witness} supplies one simultaneous unit point
in $\F_p^2$, and therefore
\begin{equation}
 \beta_p\ge p^{-2}\left(\frac p{p-1}\right)^6>0.
 \label{eq:fixedlocal}
\end{equation}
Now let $p\nmid W$; in particular $p\ge13$.  If $p\mid q$, Corollary~\ref{cor:lattice-special-fibre} identifies the
lattice-coordinate reduction with the complete special affine plane.  If
$p\nmid q$, then $p\nmid\Delta=q^2\delta$ and
Lemma~\ref{lem:sublatticereduction} identifies the lattice-coordinate fibre
with the complete $(r,s)$-plane.  In the denominator-prime case nonvanishing
follows from \eqref{eq:qconditions}; away from the denominator it follows from
the parameter sieve and Lemma~\ref{lem:zeroforms}.  Thus every coordinate is
a nonzero affine form on $\F_p^2$.  Each has at most $p$ zeros, so at least
$p^2-6p>0$ points avoid all six zero sets.  This proves admissibility and gives
\begin{equation}
 \beta_p\ge a_p:=(1-6/p)(1-1/p)^{-6}\qquad(p\nmid W).
 \label{eq:localbound}
\end{equation}
Furthermore,
\[
 \log a_p=-\frac{15}{p^2}+O(p^{-3}),
\]
so
\[
 \sum_{p\nmid W}|\log a_p|<\infty,
 \qquad \prod_{p\nmid W}a_p>0.
\]
The preceding positivity at every prime, together with finite complexity,
shows that each retained system is admissible in the sense of
Bienvenu's Definition~1.1 \cite[Definition~1.1]{Bienvenu}.  By
Bienvenu's Lemma~1.2, its singular series $\prod_p\beta_p$ converges and
is nonzero \cite[Lemma~1.2]{Bienvenu}.  For every $X$, the finite partial
product over $p\le X$ is bounded below by the corresponding product of
\eqref{eq:fixedlocal} for $p\mid W$ and \eqref{eq:localbound} for
$p\nmid W$.  Passing to the limit gives an absolute constant
\begin{equation}
 c_*:=
 \prod_{p\mid W}p^{-2}\left(\frac p{p-1}\right)^6
 \prod_{p\nmid W}a_p>0
 \label{eq:cstar}
\end{equation}
such that
\begin{equation}
 \prod_p\beta_p\ge c_*
 \label{eq:SSlower}
\end{equation}
uniformly over all retained planes.

\section{Prime points on the planes}\label{sec:primes}

For an affine form $\psi$, write $\dot\psi$ for its linear part.  If the
ambient dimension is $d_0$, let $e_1,\dots,e_{d_0}$ be the standard basis of
$\Z^{d_0}$.  Following Bienvenu, define
\[
 \|\Psi\|_{N,B}=
 \frac1{(\log N)^B}
 \left(
 \sum_{i=1}^t\frac{|\psi_i(0)|}{N}
 +\sum_{i=1}^t\sum_{j=1}^{d_0}|\dot\psi_i(e_j)|
 \right).
\]

\begin{proposition}[Bienvenu's prime-supported theorem, uniform form]\label{prop:bienvenu}
Fix positive integers $d_0,t$ and positive constants $A_0,B_0,L_0,c_V$.
For every $\epsilon>0$, there is a threshold
\[
 N_0=N_0(d_0,t,A_0,B_0,L_0,c_V,\epsilon)
\]
with the following property.  Whenever $N\ge N_0$,
$\Psi=(\psi_1,\ldots,\psi_t):\Z^{d_0}\to\Z^t$ is admissible,
\[
 \|\Psi\|_{N,B_0}\le L_0,
\]
and $\mathcal K\subset[-N,N]^{d_0}$ is convex with
\[
 \Vol(\mathcal K)\ge c_VN^{d_0}(\log N)^{-A_0},
 \qquad
 \Psi(\mathcal K)\subset[N^{9/10},\infty)^t,
\]
one has, for $\Lambda'(m)=\1_{m\in\Pp}\log m$,
\begin{equation}
 \left|
 \frac{
  \displaystyle\sum_{n\in\mathcal K\cap\Z^{d_0}}
       \prod_{i=1}^t\Lambda'(\psi_i(n))
 }{
  \displaystyle\beta_\infty\prod_p\beta_p
 }-1\right|\le\epsilon,
 \label{eq:BienvenuUniform}
\end{equation}
where
\[
 \beta_\infty
 =\Vol\bigl(\mathcal K\cap\Psi^{-1}(\R_{>0}^t)\bigr)
 =\Vol(\mathcal K).
\]
\end{proposition}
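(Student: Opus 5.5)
We plan to obtain the statement by specializing \cite[Proposition~2.1]{Bienvenu} and then turning its additive error term into the relative error \eqref{eq:BienvenuUniform}. That turn requires lower bounds for both factors of the main term $\beta_\infty\prod_p\beta_p$. The key unverified input is the exact form of Bienvenu's error term, discussed at the end.

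\emph{Step 1: the additive asymptotic.} Bienvenu's proposition should provide the following. For fixed $d_0,t,B_0,L_0$ and every $A>0$, uniformly over admissible $\Psi$ with $\|\Psi\|_{N,B_0}\le L_0$ and convex $\mathcal K\subset[-N,N]^{d_0}$, we have
\[
 \sum_{n\in\mathcal K\cap\Z^{d_0}}\prod_{i=1}^t\Lambda'(\psi_i(n))
 =\beta_\infty\prod_p\beta_p+o_{A}\bigl(N^{d_0}(\log N)^{-A}\bigr).
\]
The form of the weight must be checked against the citation. If Bienvenu's theorem is stated for the von Mangoldt function $\Lambda$, we pass to $\Lambda'$ by discarding prime powers. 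On $\mathcal K$ every $\psi_i\ge N^{9/10}$, and $\psi_i\ll N(\log N)^{B_0}$. Points where some $\psi_i(n)$ is a proper prime power therefore number $O(N^{d_0-1/2+o(1)})$, since a nonconstant affine form has this many preimages of perfect powers in the box. Each such point carries weight at most $(\log N)^{O(t)}$, so this contribution is negligible. The hypothesis $\Psi(\mathcal K)\subset[N^{9/10},\infty)^t$ also forces $\mathcal K\subset\Psi^{-1}(\R_{>0}^t)$, which gives $\beta_\infty=\Vol(\mathcal K)$.

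\emph{Step 2: lower bound for $\beta_\infty$.} By hypothesis, $\beta_\infty=\Vol(\mathcal K)\ge c_VN^{d_0}(\log N)^{-A_0}$.

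\emph{Step 3: uniform lower bound for the singular series.} This is the expected obstacle. We aim for $\prod_p\beta_p\ge(\log N)^{-O(1)}$, with the implied constant depending only on $d_0,t,B_0$.
\begin{itemize}
\item \emph{Generic primes.} Let $D_\Psi$ be the product of all nonzero $2\times2$ minors of the linear parts of pairs $\psi_i,\psi_j$, together with the nonzero linear coefficients. Each factor is $\ll(\log N)^{2B_0}L_0^2$, so $D_\Psi\le(\log N)^{O(1)}$. For $p\nmid D_\Psi$ with $p>t$, a standard inclusion--exclusion count of the zeros of pairwise independent nonzero affine forms modulo $p$ gives $\beta_p=1+O_t(p^{-2})$. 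Hence the product over such primes is $\gg_t1$.
\item \emph{Exceptional primes.} For $p\mid D_\Psi$ or $p\le t$, admissibility supplies a point of $\F_p^{d_0}$ at which all $t$ forms are units. This gives $\beta_p\ge p^{-d_0}$.
\item \emph{Combining.} The product over exceptional primes is then at least $\bigl(t!\,\rad(D_\Psi)\bigr)^{-d_0}\ge(\log N)^{-O(1)}$.
\end{itemize}
Bienvenu's Lemma~1.2 guarantees convergence of the product, so taking the limit of the partial products is legitimate.

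\emph{Step 4: conclusion.} Combining Steps 2 and 3, $\beta_\infty\prod_p\beta_p\ge c_VN^{d_0}(\log N)^{-A_0-O(1)}$. We apply Step 1 with $A=A_0+O(1)+1$ and divide by this lower bound. The relative error is then $o(1)$ uniformly in $\Psi$ and $\mathcal K$, so it is at most $\epsilon$ once $N\ge N_0(d_0,t,A_0,B_0,L_0,c_V,\epsilon)$.

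The delicate point is Step 1. We must confirm that Bienvenu's uniformity genuinely allows polylogarithmic coefficient growth, as encoded in $\|\cdot\|_{N,B_0}$, and that it saves an arbitrary power of $\log N$. If it only gives $o(N^{d_0})$, we would instead apply it after rescaling to a subbox of side $N(\log N)^{-A}$ covering $\mathcal K$, absorbing the size change into the norm bound. This is where we expect the real care to be needed.
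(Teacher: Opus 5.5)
\textbf{Gap: your Step~1 assumes an input that Bienvenu's result, as the paper uses it, does not provide.} You postulate an additive error $o_A(N^{d_0}(\log N)^{-A})$ for every $A$, uniformly over all convex $\mathcal K\subset[-N,N]^{d_0}$.

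Applied to $\mathcal K=[-N,N]^{d_0}$, this would be a Green--Tao asymptotic with an arbitrary power-of-logarithm saving. The underlying method does not give that: its $o(1)$ comes from a $W$-trick with slowly growing $w$ and a qualitative inverse theorem, with no rate.

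What the paper uses is a \emph{relative} asymptotic. Under $\Vol(\mathcal K)\ge N^{d_0}(\log N)^{-A}$ and the other hypotheses of the statement, the prime-supported sum equals $\beta_\infty\prod_p\beta_p(1+o(1))$, with the $o(1)$ uniform in $\Psi$ and $\mathcal K$. This does not imply your Step~1: for $\Vol(\mathcal K)\asymp N^{d_0}$ it gives only an additive $o(N^{d_0})$. So your argument does not start from the available black box.

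With the correct input the proof is a direct specialization:
\begin{itemize}
\item once $c_V\log N\ge1$ one has $\Vol(\mathcal K)\ge N^{d_0}(\log N)^{-(A_0+1)}$, so Bienvenu applies with $A=A_0+1$, $B=B_0$, $L=L_0$;
\item the uniform $o(1)$ unpacks into a single threshold $N_0$;
\item positivity of the forms on $\mathcal K$ gives $\beta_\infty=\Vol(\mathcal K)$, as you note.
\end{itemize}
No passage from $\Lambda$ to $\Lambda'$ and no singular-series estimate is needed.

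\textbf{Your conversion would also fail where a conversion is genuinely needed.} That case is an additive error of size $o(\Vol(\mathcal K))$. Dividing by $\Vol(\mathcal K)\prod_p\beta_p$ then requires $\prod_p\beta_p\gg_{d_0,t}1$, but your Step~3 gives only $(\log N)^{-O(1)}$.

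The uniform bound is easy and makes the $D_\Psi$ bookkeeping unnecessary. Admissibility means no $\psi_i$ vanishes identically modulo $p$. So for $p>t$ at most $tp^{d_0-1}$ points of $\F_p^{d_0}$ are zeros of some form, and $\beta_p\ge(1-t/p)(1-1/p)^{-t}$. This is positive and equals $1-O(t^2/p^2)$, whether or not pairs of forms degenerate modulo $p$. For $p\le t$ one has $\beta_p\ge p^{-d_0}$. This is the same mechanism as behind \eqref{eq:localbound}.

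\textbf{Your fallback does not work either.} Rescaling to a subbox of side $N(\log N)^{-A}$ fails because a convex body of volume $c_VN^{d_0}(\log N)^{-A_0}$ may be a thin slab of diameter $\asymp N$. Moreover, an $o(N^{d_0})$ estimate says nothing at that volume; handling that regime is exactly the content of Bienvenu's theorem.
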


\begin{proof}
For $N$ sufficiently large in terms of $c_V$, the inequality
$c_V\log N\ge1$ holds, and therefore
\[
 \Vol(\mathcal K)
 \ge N^{d_0}(\log N)^{-(A_0+1)}.
\]
Bienvenu's Proposition~2.1, applied in ambient dimension $d_0$ with
\[
 A=A_0+1,\qquad B=B_0,\qquad L=L_0,
\]
gives the prime-supported relative asymptotic
\[
 \sum_{n\in\mathcal K\cap\Z^{d_0}}
 \prod_{i=1}^t\Lambda'(\psi_i(n))
 =\beta_\infty\prod_p\beta_p
  \bigl(1+o_{d_0,t,A_0+1,B_0,L_0}(1)\bigr)
\]
uniformly over all admissible systems and convex bodies satisfying those
fixed bounds \cite[Proposition~2.1]{Bienvenu}.  Written with explicit
quantifiers, this uniform $o(1)$ says that for every $\epsilon>0$ there is a
single threshold, depending only on the displayed fixed parameters and
$\epsilon$, beyond which the absolute value of every individual relative
error is at most $\epsilon$.  Enlarging that threshold to include
$c_V\log N\ge1$ proves \eqref{eq:BienvenuUniform}.  Finally, the lower-value
hypothesis makes every form positive on $\mathcal K$, so Bienvenu's
archimedean factor is exactly $\Vol(\mathcal K)$.
\end{proof}

\begin{proposition}[Uniform prime count on one plane]\label{prop:perplane}
Fix $\tau>0$.  There are $T_0(\tau)$ and $c_3(\tau)>0$ such that the following
holds.  Suppose $T\ge T_0(\tau)$,
\[
 q,Z\le(\log T)^\tau,
\]
$q$ is a retained denominator, and $(A,B,C)$ is a retained parameter triple
over $q$.  Then the corresponding plane contains at least
\begin{equation}
 c_3(\tau)\frac{T^2}{q^2(\log T)^6}
 \label{eq:perplane}
\end{equation}
lattice points at which $x_1,\ldots,x_6$ are pairwise distinct primes.
\end{proposition}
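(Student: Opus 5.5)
We apply Proposition~\ref{prop:bienvenu} with $d_0=2$ and $t=6$ to the system $\Psi$ produced by Lemma~\ref{lem:coordinates}, on the convex body $\mathcal K$ pulled back from $\mathcal R_T$. The scale is $N=C_1T$. Since $q\le(\log T)^\tau$, we have $\Delta=q^2\delta\le508q^2\le C_0^{-1}T$ once $T$ is large, so Lemma~\ref{lem:coordinates} applies and gives $\mathcal K\subset[-N,N]^2$. It also gives
\[
 \Vol(\mathcal K)\asymp T^2/q^2\gg N^2(\log N)^{-2\tau},
\]
so the volume hypothesis holds with $A_0=2\tau$ and some absolute $c_V$.

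The norm hypothesis is checked similarly. By \eqref{eq:coeffbound}, every coefficient and constant term of $\Psi$ is $O(Z^2q^2)=O((\log T)^{4\tau})$. Hence $\|\Psi\|_{N,B_0}\le L_0$ with $B_0=4\tau$ and an absolute $L_0$.

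Admissibility comes from two earlier results. Finite complexity is Lemma~\ref{lem:complexity}, since invertible reparametrization preserves it. Positivity of every $\beta_p$ is Section~\ref{sec:local}.

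For the lower-value condition, Lemma~\ref{lem:arch} gives $x_i\ge cZT$ on the region, and $cZT\ge N^{9/10}$ for large $T$. Positivity also makes $\beta_\infty=\Vol(\mathcal K)$.

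Applying the proposition with $\epsilon=1/2$ then gives
\[
 \sum_{n\in\mathcal K\cap\Z^2}\prod_i\Lambda'(\psi_i(n))\ge\tfrac12\Vol(\mathcal K)\,c_*\gg T^2/q^2,
\]
using \eqref{eq:SSlower}. The threshold depends only on $\tau$ through the fixed $A_0,B_0,L_0,c_V$. The weighted count has to be converted into a count of points. By Lemma~\ref{lem:arch} each form is at most $CZ^2T\le T^2$, so each $\Lambda'$ factor is at most $2\log T$. Therefore the number of points with all six $\psi_i(n)$ prime is at least $\gg T^2/(q^2(2\log T)^6)$.

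Finally we remove the points where two coordinates coincide. By Lemma~\ref{lem:complexity} the difference $\psi_i-\psi_j$ has nonzero linear part, so its zero set is a line in $\Z^2$. Since $\mathcal K\subset[-N,N]^2$, this line meets $\mathcal K$ in $O(T)$ lattice points. Over the fifteen pairs this removes $O(T)$ points, which is negligible against $T^2/(q^2(\log T)^6)\ge T^2(\log T)^{-2\tau-6}$ for large $T$. Distinct lattice points are distinct points of the plane by the uniqueness in Lemma~\ref{lem:integrallift}.

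The main obstacle is uniformity. The threshold $N_0$ has to be independent of the particular plane, and this rests on every quantity being bounded by a fixed power of $\log T$. The quantities at stake are $q$, $Z$, the coefficient sizes, and the volume loss. We also have to match Bienvenu's convention exactly, with integer forms, $\mathcal K\subset[-N,N]^{d_0}$ and the $N^{9/10}$ lower bound. The $N^{9/10}$ bound needs only that each $x_i$ exceeds $cT$, which Lemma~\ref{lem:arch} supplies since $Z\ge100$.
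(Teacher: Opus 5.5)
Your proposal is correct and follows the paper's proof essentially step for step. You use Lemma~\ref{lem:coordinates} to set up $\Psi$ and $\mathcal K$ at scale $N\asymp T$, check the volume, coefficient-size, lower-value and admissibility hypotheses of Proposition~\ref{prop:bienvenu} with $\epsilon=1/2$ together with $\prod_p\beta_p\ge c_*$, divide by the maximal logarithmic weight, and delete the fifteen collision lines carrying $O(T)$ points each. The differences are only in slack: you take $A_0=2\tau$, $B_0=4\tau$, $N=C_1T$, where the paper takes $2\tau+2$, $4\tau+2$, $N=\lceil C_1T\rceil$. Your choices also work because $\log N\asymp\log T$, though $N$ should be taken to be an integer, as the paper does.
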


\begin{proof}
Because $q$ is polylogarithmic in $T$, one has $T\ge C_0q^2\delta$ for all
sufficiently large $T$, uniformly in $\delta\mid508$.
Lemma~\ref{lem:coordinates} therefore applies.  Set
\begin{equation}
 N=\lceil C_1T\rceil,
 \qquad A_{\rm vol}=2\tau+2,
 \qquad B_0=4\tau+2,
 \label{eq:Bienvenuparameters}
\end{equation}
where $C_1$ is enlarged once, if necessary, so that
$\mathcal K\subset[-N,N]^2$.

We now check the hypotheses of Bienvenu's Definition~1.1, Lemma~1.2, and
Proposition~2.1 \cite[Definition~1.1, Lemma~1.2, Proposition~2.1]{Bienvenu}.
First, \eqref{eq:Kvolume} and $q\le(\log T)^\tau$ give
\begin{equation}
 \Vol(\mathcal K)
 \gg\frac{T^2}{q^2}
 \ge c_V(\tau)N^2(\log N)^{-A_{\rm vol}}
 \label{eq:Bienvenuvolume}
\end{equation}
for all sufficiently large $T$.  Second, by \eqref{eq:coeffbound},
\begin{align}
 &\sum_{i=1}^6\frac{|\psi_i(0)|}{N}
 +\sum_{i=1}^6\sum_{j=1}^2|\dot\psi_i(e_j)|\notag\\
 &\qquad\ll Z^2q^2\left(1+\frac1T\right)
 \ll_\tau(\log N)^{4\tau}.
 \label{eq:Bienvenusize}
\end{align}
After division by $(\log N)^{B_0}$ this is at most a fixed $L_0(\tau)$.
Third, Lemma~\ref{lem:arch} gives
\[
 \min_i\psi_i(n)\gg T\qquad(n\in\mathcal K).
\]
Since $N\asymp T$, this implies
\begin{equation}
 \Psi(\mathcal K)\subset[N^{9/10},\infty)^6
 \label{eq:Bienvenupositive}
\end{equation}
for all sufficiently large $T$.

The exact fixed parameters in Bienvenu's Proposition~2.1 are
\begin{equation}
 \begin{aligned}
 d_0&=2,\qquad t=6,\qquad A=A_{\rm vol}+1=2\tau+3,\\
 B&=B_0=4\tau+2,\qquad L=L_0(\tau).
 \end{aligned}
 \label{eq:Bienvenuapplicationdata}
\end{equation}
The black-box hypotheses are therefore accounted for as follows.
\begin{center}
\begingroup
\small
\renewcommand{\arraystretch}{1.18}
\begin{tabular}{>{\raggedright\arraybackslash}p{0.34\textwidth}>{\raggedright\arraybackslash}p{0.55\textwidth}}
\toprule
Bienvenu hypothesis&Present check\\
\midrule
$d_0=2$, $t=6$&Fixed throughout this section.\\
$\mathcal K\subset[-N,N]^2$&Lemma~\ref{lem:coordinates} and the choice of $N$.\\
$\Vol(\mathcal K)\gg N^2(\log N)^{-A_{\rm vol}}$&Equation~\eqref{eq:Bienvenuvolume}.\\
$\|\Psi\|_{N,B_0}\le L_0$&Equation~\eqref{eq:Bienvenusize}, with $B_0=4\tau+2$.\\
$\Psi(\mathcal K)\subset[N^{9/10},\infty)^6$&Equation~\eqref{eq:Bienvenupositive}.\\
Finite complexity&Lemma~\ref{lem:complexity}, preserved by the invertible lattice reparametrization.\\
$\beta_p>0$ for every $p$&The cases $p\mid W$, $p\mid q$, and $p\nmid Wq$ in Section~\ref{sec:local}.\\
$\prod_p\beta_p\ge c_*$&Equation~\eqref{eq:SSlower}, with convergence from Bienvenu's Lemma~1.2.\\
\bottomrule
\end{tabular}
\endgroup
\end{center}

Indeed, the lattice-coordinate map has invertible linear part over $\R$, so
two pulled-back linear parts are proportional only if the original ones are.
Thus finite complexity is preserved.  Apply Proposition~\ref{prop:bienvenu}
with
\[
 \begin{aligned}
 d_0&=2, & t&=6, & A_0&=A_{\rm vol},\\
 B_0&=4\tau+2, & L_0&=L_0(\tau), & c_V&=c_V(\tau),
 \qquad \epsilon=\tfrac12.
 \end{aligned}
\]
Equivalently, Bienvenu's source proposition is used with the fixed tuple in
\eqref{eq:Bienvenuapplicationdata}.  Increase $T_0(\tau)$ so that
$N=\lceil C_1T\rceil$ exceeds the single threshold supplied by
Proposition~\ref{prop:bienvenu} for these parameters.  Then every retained
plane satisfies the same relative-error bound $1/2$, and therefore
\begin{equation}
 \sum_{n\in\mathcal K\cap\Z^2}
 \prod_{i=1}^6\Lambda'(\psi_i(n))
 \ge\frac{c_*}{2}\Vol(\mathcal K)
 \gg_\tau\frac{T^2}{q^2}.
 \label{eq:weighted}
\end{equation}

Every prime value is $O(Z^2T)$, and therefore its logarithm is
$O_\tau(\log T)$.  Dividing \eqref{eq:weighted} by the maximum possible
product of the six logarithmic weights gives \eqref{eq:perplane} before
coordinate collisions are removed.

For $i\ne j$, the equation $\psi_i(n)=\psi_j(n)$ is a genuine affine line,
because the two linear parts are nonproportional.  Distinct integer points on
a line have Euclidean separation at least $1$, while its intersection with
$[-C_1T,C_1T]^2$ has length $O(T)$.  Thus every collision line contains
$O(T)$ integer points, uniformly in its coefficients.  Since
\[
 \frac{T}{T^2/(q^2(\log T)^6)}
 =\frac{q^2(\log T)^6}{T}\longrightarrow0
\]
uniformly for $q\le(\log T)^\tau$, deleting all fifteen collision lines
preserves a fixed proportion of the lower bound.
\end{proof}

\medskip
\noindent\textbf{Intersection accounting.}
Let two retained planes have parameter triples $\theta\ne\theta'$.  By
Lemma~\ref{lem:distinctplanes}, their skew matrices are distinct.  The matrix
$A_\theta-A_{\theta'}$ is a nonzero $3\times3$ skew-symmetric matrix, and
hence has rank two.  Thus the corresponding homogeneous graph spaces meet in
the line
\[
 \{(U,A_\theta U):U\in\ker(A_\theta-A_{\theta'})\}
\]
through the origin.  The retained affine planes are their intersections with
$\zeta-y_5=1$.  Since a line through the origin cannot be contained in that
affine hyperplane, two distinct retained planes meet in at most one point.

More generally, let $\mathcal P_{\rm pl}$ be a finite family of retained planes and let
$m(z)$ be the number of its members containing $z$.  Then
\begin{equation}
 \sum_z(m(z)-1)
 \le\sum_z\binom{m(z)}2
 \le\binom{\#\mathcal P_{\rm pl}}2,
 \label{eq:intersectionaccounting}
\end{equation}
because a point on $m(z)$ planes determines $\binom{m(z)}2$ unordered pairs
of planes and each pair has at most one common point.

\section{Summing the ruling and counting cores}\label{sec:summing}

Choose absolute constants $0<c_h<C_h$ for which Lemma~\ref{lem:arch} gives
\begin{equation}
 c_hZ^3T^2\le508x_5x_6\le C_hZ^3T^2
 \label{eq:heightbounds}
\end{equation}
throughout the positive region, and put
\[
 c_0=\frac{c_h}{C_h}\in(0,1).
\]

\begin{lemma}[Uniform polylogarithmic thresholds]\label{lem:uniformthresholds}
Fix $\kappa>0$.  Suppose
\[
 1\le Q,Z\le(\log Y)^\kappa,
 \qquad q\in[Q,2Q].
\]
Put
\begin{equation}
 T=\left(\frac{Y}{C_hZ^3}\right)^{1/2}.
 \label{eq:Tchoice}
\end{equation}
As $Y\to\infty$, the following statements hold uniformly in all such
$Q,Z,q$:
\begin{align}
 \log T&\asymp_\kappa\log Y,\label{eq:logcompare}\\
 q,Z&\le(\log T)^{\kappa+1},\label{eq:polytransfer}\\
 \frac{T}{q^2}&\longrightarrow\infty,
 &\frac{T}{q^2(\log T)^6}&\longrightarrow\infty,
 \label{eq:Tthresholds}\\
 \frac{Q^8Z^8}{T^2Q^2Z^4/(\log T)^6}&\longrightarrow0.
 \label{eq:overlapthreshold}
\end{align}
In addition, $ZT\to\infty$ uniformly.
\end{lemma}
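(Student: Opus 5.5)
The plan is elementary asymptotics: sandwich $\log T$ between multiples of $\log Y$, then reduce every other statement to ``a fixed power of $\log Y$ is $o$ of a fixed positive power of $Y$.''

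\textbf{Step 1: \eqref{eq:logcompare}.} Take logarithms in \eqref{eq:Tchoice}:
\[
 \log T=\tfrac12\log Y-\tfrac12\log C_h-\tfrac32\log Z .
\]
Since $1\le Z\le(\log Y)^\kappa$, we have $0\le\log Z\le\kappa\log\log Y=o(\log Y)$. Hence, for $Y$ beyond a threshold depending only on $\kappa$ and the absolute constant $C_h$,
\[
 \tfrac14\log Y\le\log T\le\log Y .
\]
This proves \eqref{eq:logcompare}. It also gives $T\ge Y^{1/4}\to\infty$ and $T\le Y$.

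\textbf{Step 2: \eqref{eq:polytransfer}.} We have $q\le2Q\le2(\log Y)^\kappa$, and Step 1 gives $\log Y\le4\log T$. So
\[
 q,Z\le 2\cdot4^\kappa(\log T)^\kappa .
\]
This is at most $(\log T)^{\kappa+1}$ once $\log T\ge2\cdot4^\kappa$, which holds for large $Y$ by Step 1.

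\textbf{Step 3: \eqref{eq:Tthresholds}.} By Step 2, $q^2(\log T)^6\le(\log T)^{2\kappa+8}$, while $T\to\infty$ uniformly. Since a power of $\log T$ is $o(T)$, both ratios tend to infinity uniformly.

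\textbf{Step 4: \eqref{eq:overlapthreshold}.} The ratio simplifies to
\[
 \frac{Q^6Z^4(\log T)^6}{T^2}.
\]
Its numerator is at most $(\log Y)^{10\kappa+6}$. Its denominator satisfies
\[
 T^2=\frac{Y}{C_hZ^3}\ge\frac{Y}{C_h(\log Y)^{3\kappa}} .
\]
So the ratio is $\ll(\log Y)^{13\kappa+6}/Y$, which tends to $0$ uniformly.

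\textbf{Step 5: $ZT\to\infty$.} This is immediate from $Z\ge1$ and $T\ge Y^{1/4}$.

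None of these steps is a real obstacle. The only point needing attention is uniformity: every threshold above depends only on $\kappa$ and $C_h$, and never on the particular $Q,Z,q$. This holds because each bound used only the envelope $1\le Q,Z\le(\log Y)^\kappa$.
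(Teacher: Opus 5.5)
Your proposal is correct and follows essentially the same route as the paper: expand $\log T$, sandwich it between constant multiples of $\log Y$, and reduce every remaining claim to a fixed power of $\log Y$ being small against a positive power of $Y$; the overlap estimate $(\log Y)^{13\kappa+6}/Y$ is identical to the paper's. The differences are cosmetic. You use the constant $1/4$ where the paper uses $1/3$. For the $T/q^2$ thresholds you go through the transferred bound $q\le(\log T)^{\kappa+1}$ together with $T\ge Y^{1/4}$ and $\log T\le\log Y$, whereas the paper bounds $T/q^2$ directly in terms of $Y$.
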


\begin{proof}
From \eqref{eq:Tchoice},
\[
 \log T=\frac12\log Y-\frac32\log Z-\frac12\log C_h.
\]
Since $\log Z\le\kappa\log\log Y$, we have, for all sufficiently large $Y$,
\[
 \frac13\log Y\le\log T\le\log Y,
\]
which proves \eqref{eq:logcompare}.  It also gives
\[
 q\le2(\log Y)^\kappa
 \le2\cdot3^\kappa(\log T)^\kappa
 \le(\log T)^{\kappa+1}
\]
for large $Y$, and the same argument applies to $Z$; this proves
\eqref{eq:polytransfer}.

Uniformly in the stated range,
\[
 \frac{T}{q^2}
 \gg_\kappa\frac{Y^{1/2}}{Z^{3/2}(\log Y)^{2\kappa}}
 \ge\frac{Y^{1/2}}{(\log Y)^{7\kappa/2}},
\]
which tends to infinity.  Division by an additional factor
$(\log T)^6\ll(\log Y)^6$ still leaves a quantity tending to infinity,
proving \eqref{eq:Tthresholds}.  Finally,
\begin{align*}
 \frac{Q^8Z^8}{T^2Q^2Z^4/(\log T)^6}
 &=\frac{Q^6Z^4(\log T)^6}{T^2}\\
 &=\frac{C_hQ^6Z^7(\log T)^6}{Y}\\
 &\ll_\kappa\frac{(\log Y)^{13\kappa+6}}{Y}
 \longrightarrow0,
\end{align*}
which is \eqref{eq:overlapthreshold}.  The lower bound for $T$ above also
implies $ZT\to\infty$.
\end{proof}

We now prove Theorem~\ref{thm:cores}.  Fix $\kappa>0$, and choose
\[
 Q_\kappa\ge\max\{Q_0,q_0,3\},
 \qquad Z_\kappa\ge\max\{Z_0,100\}.
\]
For given $Y,Q,Z$ in the ranges of Theorem~\ref{thm:cores}, define $T$ by
\eqref{eq:Tchoice}.  By Lemma~\ref{lem:uniformthresholds}, after choosing
$Y_\kappa$ sufficiently large, all of the following hold simultaneously and
uniformly: Proposition~\ref{prop:perplane} applies with $\tau=\kappa+1$; the
lattice requirement $T\ge C_0\Delta$ holds; coordinate collisions and plane
overlaps are negligible; every prime value exceeds $127$; and $ZT$ is large
enough for the discrepancy estimate below.

Lemma~\ref{lem:denominators} supplies $\gg Q$ retained denominators
$q\in[Q,2Q]$.  Proposition~\ref{prop:parametersieve} supplies
$\gg q^3Z^4$ retained planes over each such $q$, while
Proposition~\ref{prop:perplane} supplies
$\gg_\kappa T^2/(q^2(\log T)^6)$ pairwise-distinct prime points on each
plane.  The total incidence count is therefore
\begin{equation}
 I(T;Q,Z)\gg_\kappa
 \frac{T^2Z^4}{(\log T)^6}\sum_q q
 \gg_\kappa\frac{T^2Q^2Z^4}{(\log T)^6}.
 \label{eq:incidence}
\end{equation}

Let $m(z)$ denote the number of retained planes contributing the prime point
$z$.  Then
\begin{equation}
 \#\{z:m(z)\ge1\}
 =\sum_zm(z)-\sum_z(m(z)-1)
 =I(T;Q,Z)-\sum_z(m(z)-1).
 \label{eq:distinctfromincidence}
\end{equation}
Let $\mathcal P_{\rm pl}$ be the family of retained planes used in
\eqref{eq:incidence}.  For a fixed $q$, the total number of candidate
parameter triples is at most
\[
 O\left(\frac{qZ^2}{W}+1\right)
 O\left(\frac{qZ}{W}+1\right)^2.
\]
Therefore, since $Q,Z\ge1$,
\begin{equation}
 \#\mathcal P_{\rm pl}
 \le\sum_{Q\le q\le2Q}
 O\left(\left(\frac{qZ^2}{W}+1\right)
          \left(\frac{qZ}{W}+1\right)^2\right)
 =O_W(Q^4Z^4).
 \label{eq:planefamilyupper}
\end{equation}
Equation~\eqref{eq:intersectionaccounting} consequently gives
\[
 \sum_z(m(z)-1)\le\binom{\#\mathcal P_{\rm pl}}{2}=O_W(Q^8Z^8).
\]
Moreover,
\begin{equation}
 \frac{Q^8Z^8}{T^2Q^2Z^4/(\log T)^6}
 =\frac{Q^6Z^4(\log T)^6}{T^2}=o(1)
 \label{eq:overlapratioexplicit}
\end{equation}
by \eqref{eq:overlapthreshold}.  Hence the second term in
\eqref{eq:distinctfromincidence} is $o(I(T;Q,Z))$, and
\eqref{eq:incidence} gives the same order of magnitude of distinct ordered
prime sextuples.

For each such sextuple, define
\[
 u=15p_1p_2,\qquad v=31p_3p_4,
 \qquad h=508p_5p_6.
\]
Equation \eqref{eq:product} gives $u+v=h$.  The six primes are pairwise
distinct, exceed $127$, and avoid all fixed prime divisors, so $(u,v)=1$.
By \eqref{eq:heightbounds} and \eqref{eq:Tchoice},
\[
 c_0Y\le h\le Y.
\]
Given one ordered core $(u,v)$, the perimeter $h=u+v$ is also fixed.  Since
all six variable primes exceed $127$ and are pairwise distinct, none divides
$15\cdot31\cdot508$, and unique factorization gives the three unordered
prime pairs exactly:
\[
 \{p_1,p_2\}\ \text{from }u/15,\qquad
 \{p_3,p_4\}\ \text{from }v/31,\qquad
 \{p_5,p_6\}\ \text{from }h/508.
\]
The only remaining ambiguity is the order within each of these three pairs.
Thus at most $2^3=8$ ordered prime sextuples yield the same ordered core.
Since $T^2=Y/(C_hZ^3)$ and
$\log T\asymp_\kappa\log Y$, the number of distinct generated cores is at
least
\begin{equation}
 c_\kappa\frac{YQ^2Z}{(\log Y)^6}.
 \label{eq:corecount}
\end{equation}

The following final clause is not needed for the superlinear lower bound;
it records the additional structural fact that the fixed non-coprime
multiplier $3600$ is genuinely essential for the generated cores.  Since
\[
 \sigma(15)=24,\qquad\sigma(31)=32,\qquad\sigma(508)=896,
\]
the quadratic contribution is exactly
\begin{equation}
 24p_1p_2+32p_3p_4-896p_5p_6
 =-\frac{88}{5}p_3p_4-\frac{416}{5}p_5p_6.
 \label{eq:negativequad}
\end{equation}
Consequently the complete discrepancy is
\begin{align}
 \sigma(u)+\sigma(v)-\sigma(h)
 ={}&-\frac{88}{5}p_3p_4-\frac{416}{5}p_5p_6\notag\\
 &+24(p_1+p_2+1)+32(p_3+p_4+1)\notag\\
 &-896(p_5+p_6+1).
 \label{eq:discrepancy}
\end{align}
The exact derivation of \eqref{eq:negativequad} from \eqref{eq:product} is
recorded again in Appendix~\ref{app:algebra}.  Lemma~\ref{lem:arch} gives
absolute constants $c_4,C_4>0$ for which
\[
 p_3p_4,p_5p_6\ge c_4Z^3T^2,
 \qquad p_1+\cdots+p_6\le C_4Z^2T.
\]
It follows from \eqref{eq:discrepancy} that, after changing the absolute
constants if necessary,
\[
 \sigma(u)+\sigma(v)-\sigma(h)
 \le-c_4Z^3T^2+C_4Z^2T<0
\]
whenever $ZT>C_4/c_4$.  This is uniform by
Lemma~\ref{lem:uniformthresholds}.  Hence the reduced core is not itself a
solution.  On the other hand, Section~\ref{sec:friendly} proves
\[
 \sigma(3600u)+\sigma(3600v)=\sigma(3600h).
\]

For completeness, the single uniform threshold $Y_\kappa$ is chosen only
after every fixed constant above, and large enough that for all admissible
$Q,Z$ and every $q\in[Q,2Q]$ one has simultaneously
\begin{gather}
 T\ge T_0(\kappa+1),\qquad
 T\ge C_0\cdot508(2Q)^2,\qquad
 ZT\ge \max\{128/c,\ C_4/c_4\},\label{eq:thresholdaudit1}\\
 \frac{(2Q)^2(\log T)^6}{T}\le\varepsilon_{\rm coll},
 \qquad
 \frac{Q^6Z^4(\log T)^6}{T^2}\le\varepsilon_{\rm ov}.
 \label{eq:thresholdaudit2}
\end{gather}
Here $c$ is the lower constant in Lemma~\ref{lem:arch}, while the fixed
positive constants $\varepsilon_{\rm coll}$ and $\varepsilon_{\rm ov}$ are
chosen small enough for the collision and overlap deletions to retain, say,
half of their respective main terms.  The first inequality in
\eqref{eq:thresholdaudit1} invokes the per-plane theorem; the second follows
from $\Delta=q^2\delta\le508(2Q)^2$ and invokes the lattice-coordinate lemma; the
third ensures both $p_i>127$ and the strict negativity of the core
discrepancy.  Lemma~\ref{lem:uniformthresholds} proves that one
$Y_\kappa$ enforces all these inequalities uniformly.  This completes the
proof of Theorem~\ref{thm:cores}.

\section{Multiplier amplification and proof of the main theorem}

For one generated core, put
\[
 \mathfrak P=\rad(3600uvh).
\]
Besides the fixed primes $2,3,5,31,127$, the prime divisors of $\mathfrak P$ are
exactly $p_1,\dots,p_6$.  Consequently there is an absolute $c_5>0$ such
that
\begin{equation}
 \frac{\varphi(\mathfrak P)}{\mathfrak P}\ge c_5,
 \qquad 2^{\omega(\mathfrak P)}\le2^{11}.\label{eq:uniformphi}
\end{equation}
Every $g$ with $(g,\mathfrak P)=1$ gives another ordered solution
\[
 (3600gu,3600gv),
\]
because $g$ is coprime to each of $3600u,3600v,3600h$, so the common factor
$\sigma(g)$ may be extracted from all three divisor sums.  Inclusion--
exclusion and \eqref{eq:uniformphi} give absolute constants $c_6,X_0>0$
such that
\begin{equation}
 \#\{g\le X:(g,\mathfrak P)=1\}
 =\frac{\varphi(\mathfrak P)}{\mathfrak P}X
  +O(2^{\omega(\mathfrak P)})
 \ge c_6X\qquad(X\ge X_0).\label{eq:multipliers}
\end{equation}
Multiplier families belonging to different ordered cores are disjoint.  Indeed, suppose
\[
 (3600gu,3600gv)=(3600g'u',3600g'v'),
\]
where $(u,v)$ and $(u',v')$ are coprime cores.  Then
\[
 3600g=\gcd(3600gu,3600gv)
 =\gcd(3600g'u',3600g'v')=3600g'.
\]
Thus $g=g'$, and division by $3600g$ gives $(u,v)=(u',v')$.

Fix $\kappa>0$.  Choose $\chi>1/c_0$, and take geometrically spaced scales
$Y_j=\chi^j$ in the range
\[
 x^{1/2}\le Y_j\le\frac{x}{3600X_0}.
\]
The shells $[c_0Y_j,Y_j]$ are disjoint and their number is
$\asymp\log x$.  For each shell put
\[
 Q_j=Z_j=\left\lfloor\frac13(\log Y_j)^\kappa\right\rfloor.
\]
For sufficiently large $x$, these are above all lower thresholds in
Theorem~\ref{thm:cores}; moreover
\[
 2Q_j\le(\log Y_j)^\kappa,
 \qquad Z_j\le(\log Y_j)^\kappa,
\]
so every selected denominator $q\in[Q_j,2Q_j]$ remains in the required
polylogarithmic coefficient range.  By \eqref{eq:corecount}, the $j$th shell
contains
\[
 \gg_\kappa Y_j(\log Y_j)^{3\kappa-6}
\]
cores.  For a core of perimeter $h\le Y_j$, take
$X=x/(3600h)$.  The upper restriction on $Y_j$ ensures $X\ge X_0$, and
\eqref{eq:multipliers} supplies $\gg x/Y_j$ admissible multipliers $g$ for
which $3600gh\le x$.  Hence each shell contributes
\[
 \gg_\kappa x(\log Y_j)^{3\kappa-6}
\]
ordered solutions.  Since $\log Y_j\asymp\log x$ throughout the chosen
range, summing the disjoint shells gives
\begin{equation}
 S(x)\gg_\kappa x(\log x)^{3\kappa-5}.\label{eq:masterlower}
\end{equation}
Given $R>0$, choose a fixed $\kappa>(R+5)/3$.  Then
\[
 \frac{S(x)}{x(\log x)^R}
 \gg_\kappa(\log x)^{3\kappa-5-R}\longrightarrow+\infty.
\]
This proves Theorem~\ref{thm:main}.

\section{Conclusion}

We have proved that the counting function in Erd\H{o}s Problem 1061 satisfies
\[
 S(x)\gg_\kappa x(\log x)^{3\kappa-5}
\]
for every fixed $\kappa>0$.  Since $\kappa$ is arbitrary, this gives
\[
 \lim_{x\to\infty}\frac{S(x)}{x(\log x)^R}=+\infty
 \qquad(R>0),
\]
and hence resolves the question by ruling out every linear asymptotic
$S(x)\sim cx$.

The construction has two structural ingredients.  First, a fixed equal-
abundancy triple reduces the divisor-sum equation to a prime-point problem on
a split quadric.  Second, a three-parameter rational ruling of that quadric,
a lattice-index computation, a codimension-two parameter sieve, and Bienvenu's
uniform theorem for prime-supported affine systems produce enough prime
points that coprime multiplier amplification exceeds all fixed logarithmic
scales.  The multiplier $3600$ is not incidental: the reduced cores produced
above are not themselves solutions, but become solutions after this fixed
non-coprime activation.

\appendix

\section{Exact algebraic identities}\label{app:algebra}

This appendix records the finite algebra used in the proof.

\subsection{The quadric substitution and the expanded sixth form}

Solving \eqref{eq:linear} after writing
$x_i=y_i+y_5$ for $1\le i\le4$ and $x_5=y_5$ gives
\begin{equation}
 508x_6=15(y_1+y_2)+31(y_3+y_4)-416y_5-462.
 \label{eq:appx6y}
\end{equation}
A direct expansion then yields the exact identity
\begin{align}
 &15x_1x_2+31x_3x_4-508x_5x_6\notag\\
 &\qquad=15y_1y_2+31y_3y_4+462y_5(y_5+1),
 \label{eq:appquadric}
\end{align}
which proves the passage to \eqref{eq:split} after putting $\zeta=y_5+1$.

For the ruling coordinates \eqref{eq:x1}--\eqref{eq:x6}, define
\begin{align*}
 E&=6930\beta+14322\gamma+46,\\
 E_r&=-31\alpha-6930\beta^2-14322\beta\gamma+416\beta+1,\\
 E_s&=15\alpha-6930\beta\gamma-14322\gamma^2+416\gamma+1.
\end{align*}
Then the fully expanded sixth form is
\begin{equation}
 508x_6=-E+E_r r+E_s s.\label{eq:expandedx6}
\end{equation}
Substituting \eqref{eq:x1}--\eqref{eq:x5} and
\eqref{eq:expandedx6} gives identically
\[
 15x_1x_2+31x_3x_4=508x_5x_6
\]
and
\[
 15(x_1+x_2)+31(x_3+x_4)-508(x_5+x_6)=462.
\]

\subsection{All fifteen determinant factorizations}

With $\ell_i=(\partial_rx_i,\partial_sx_i)$, the exact determinants are
\begin{center}
\begingroup
\footnotesize
\renewcommand{\arraystretch}{1.18}
\setlength{\tabcolsep}{4pt}
\begin{tabular}{c|l}
\toprule
pair&$\det(\ell_i,\ell_j)$\\
\midrule
$1,2$&$-\bigl(15\alpha\beta-\alpha+462\beta\gamma+\gamma\bigr)/15$\\
$1,3$&$-\bigl(15\beta+31\gamma-1\bigr)/465$\\
$1,4$&$-\gamma(15\alpha+462\gamma+1)/15$\\
$1,5$&$-\gamma/15$\\
$1,6$&$-(15\alpha+462\gamma+1)(15\beta+31\gamma-1)/7620$\\
$2,3$&$\beta(31\alpha-462\beta-1)/31$\\
$2,4$&$\alpha(\alpha+\beta-\gamma)$\\
$2,5$&$\alpha\beta$\\
$2,6$&$(\alpha+\beta-\gamma)(31\alpha-462\beta-1)/508$\\
$3,4$&$-(31\alpha\gamma-\alpha-462\beta\gamma-\beta)/31$\\
$3,5$&$\beta/31$\\
$3,6$&$-(31\alpha-462\beta-1)(15\beta+31\gamma-1)/15748$\\
$4,5$&$\alpha\gamma$\\
$4,6$&$-(\alpha+\beta-\gamma)(15\alpha+462\gamma+1)/508$\\
$5,6$&$-(15\alpha\beta+31\alpha\gamma+\beta-\gamma)/508$\\
\bottomrule
\end{tabular}
\endgroup
\end{center}
These identities justify the numerator table in
Lemma~\ref{lem:complexity} without suppressing any parameter-dependent
factor.

\subsection{The three zero-form identities}

The first two criteria in Lemma~\ref{lem:zeroforms} follow from
\begin{equation}
 x_2\big|_{\beta=-1/462}=\alpha s,
 \qquad
 x_4\big|_{\gamma=-1/462}=-\alpha r.
 \label{eq:appzero24}
\end{equation}
For $x_6$, the quantities in \eqref{eq:expandedx6} satisfy
\begin{align}
 508x_6(0,0)&=-E,\label{eq:appzero60}\\
 E_r+\beta E&=-(31\alpha-462\beta-1),\label{eq:appzero6r}\\
 E_s+\gamma E&=15\alpha+462\gamma+1.
 \label{eq:appzero6s}
\end{align}
If the right sides of \eqref{eq:appzero6r} and
\eqref{eq:appzero6s} vanish, then
\[
 \beta=\frac{31\alpha-1}{462},
 \qquad
 \gamma=-\frac{15\alpha+1}{462},
\]
which gives $E=0$.  Equations \eqref{eq:appzero60}--\eqref{eq:appzero6s}
then prove both directions of the $x_6$ criterion.

\subsection{The core discrepancy}

Using \eqref{eq:product},
\begin{align*}
 &24p_1p_2+32p_3p_4-896p_5p_6\\
 &\quad=\frac{24}{15}(508p_5p_6-31p_3p_4)
       +32p_3p_4-896p_5p_6\\
 &\quad=-\frac{88}{5}p_3p_4-\frac{416}{5}p_5p_6,
\end{align*}
which is the quadratic identity used in \eqref{eq:discrepancy}.

\end{document}